\documentclass[12pt,twoside]{amsart}
\usepackage{verbatim}
\usepackage{amsmath}
\usepackage{amsthm}
\usepackage{amssymb}
\usepackage{enumerate}
\usepackage{psfrag}
\usepackage{graphicx}
\newif\ifdeveloping

\ifdeveloping
\usepackage[notref,notcite]{showkeys}
\fi

\newif\ifcommented

\newcommand{\comm}[1]{}

\ifcommented
\renewcommand{\comm}[1]{
\fbox{\fbox{\begin{minipage}{300pt}#1\end{minipage}}
}}

\fi

\newcommand{\prtime}{{\count0=\time\divide\count0 by 60
    \count1=-\count0\multiply\count1 by 60 \advance\count1 by \time
    \the\count0:\the\count1} }

\def\myheads#1;#2;{
    \pagestyle{myheadings} \markboth{{\sc\hfill
    #1\hfill\protect\makebox[0cm][r]{\rm\today; \prtime}}}
    {{\sc\protect\makebox[0cm][l]{\rm\today;\ \prtime}\hfill
    #2\hfill}} \thispagestyle{myheadings} }

\def\myheadsa#1;#2;{
    \pagestyle{myheadings} \markboth{{\sc\hfill #1\hfill}}
    {{\sc\hfill #2\hfill}} \thispagestyle{myheadings} }

\let\QED\qed
\newcommand{\prlabel}[1]{\renewcommand{\qed}{\QED${}_{\ref{#1}}$}}

\newtheorem{theorem}{Theorem}[section]
\newtheorem{corollary}[theorem]{Corollary}

\newtheorem{lemma}[theorem]{Lemma}

\newtheorem{problem}[theorem]{Problem}
\newtheorem{claim}[theorem]{Claim}

\newtheorem{conjecture}[theorem]{Conjecture}

\newtheorem{case}{Case}

\newtheorem{rtheorem}{Theorem}  
\newtheorem{rconjecture}{Conjecture}

\def\rem{\medskip \noindent {\bf Remark.}}

\theoremstyle{definition}
\newtheorem{definition}[theorem]{Definition}

\def\<{\left\langle}
\def\>{\right\rangle}

\def\br#1;#2;{\bigl[ {#1} \bigr]^ {#2} }
\def\bc#1;#2;{\bigl( {#1} \bigr)^ {#2} }

\newcommand{\subs}{\subset}
\newcommand{\setm}{\setminus}
\newcommand{\empt}{\emptyset}

\newcommand{\inc}[3]{\operatorname{{In}^{#1}_{#2}}({#3})}
\newcommand{\outc}[3]{\operatorname{{Out}}^{#1}_{#2}({#3})}

\newcommand{\incl}[1]{\operatorname{\mathfrak {In}}_{#1}}
\newcommand{\outcl}[1]{\operatorname{\mathfrak {Out}}_{#1}}

\newcommand{\inout}[2]{\mathfrak{In}_{#1}\text{-}%
\mathfrak{Out}_{#2}}

\newcommand{\T}{\mathbb T}
\newcommand{\testinf}{\T_{\infty}}
\newcommand{\testinfk}{\T_{{\kappa},{\infty}}}
\newcommand{\testt}{\T_{3}}

\newcommand{\ecal}{\mathcal E}

\newcommand{\grt}{terminated digraph}
\newcommand{\grts}{terminated digraphs}
\newcommand{\ext}[1]{\overline{#1}}
\newcommand{\gra}[2]{({#1},{#2})}
\newcommand{\concat}{{}^\frown}
\newcommand{\compl}[1]{\widetilde{#1}}

\title{Quasi-kernels and quasi-sinks in infinite graphs}
\thanks
    {The preparation of this paper was partly supported by the
    Hungarian NSF, under contract Nos. T 37758, T37846, T61600, NK62321, AT048826}

\author[P. L. Erd{\H o}s]{P{\'e}ter L. Erd{\H o}s}
\author[L. Soukup]{Lajos Soukup}
\address{ R{\'e}nyi Institute of Mathematics,
    Hungarian Academy of Sciences,
    Budapest, P.O.~Box 127, H-1364 Hungary}

\email{elp@renyi.hu}
\email{soukup@renyi.hu}

\subjclass[2000]{05C20,05C69}
\keywords{infinite directed graph, quasi-kernel, quasi-sink}

\begin{document}
 \ifdeveloping\myheads{Quasi-kernels};{Quasi-kernels};\else
   \myheadsa{Erd\H os-Soukup};{Quasi-kernels}; \fi

\begin{abstract}
\noindent 
Given a directed graph $G=(V,E)$ an independent  set  $A\subs V$ 
is called  {\em quasi-kernel $($quasi-sink$)$ } iff for each point $v$ there is a path of length at
most $2$ from some point of $A$ to $v$   (from $v$ to some point of $A$).
Every finite directed graph has a
quasi-kernel. The plain generalization for infinite graphs fails, 
even for tournaments.
We investigate the following conjecture here:   for
any digraph $G=(V,E)$ there is a a partition 
$(V_0,V_1)$ of the vertex set such that the induced subgraph  
$G[V_0]$ has a quasi-kernel and  the induced subgraph  $G[V_1]$ has a quasi-sink.
\end{abstract}

\maketitle



\section{Introduction}

\noindent
Given a directed graph $G=(V,E)$ an independent  set  $A\subs V$ 
is called  {\em quasi-kernel (quasi-sink) } iff for each point $v$ there is a path of length at
most $2$ from some point of $A$ to $v$   (from $v$ to some point of $A$).
(The notions have a fairly extensive
literature: as a starting point see for example the following
papers: \cite{GL}, \cite{gutin}, \cite{JM}.)

The starting point of our investigation was the following theorem:
\begin{theorem}[Chv\'atal- Lov\'asz, \cite{CH}]\label{tm:cl}
Every finite digraph (i.e. directed graph) contains 
a quasi-kernel. 
\end{theorem}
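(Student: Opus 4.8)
The plan is to proceed by induction on $|V|$, the case $V=\emptyset$ being vacuous. For the inductive step the idea is to take a maximal independent set, observe that it already dominates (by paths of length $\le 1$) everything outside a strictly smaller induced subgraph, recurse on that subgraph, and splice the two independent sets together after a small pruning.

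Concretely, I would first fix a maximal independent set $B\subseteq V$; since $V\ne\emptyset$, $B$ is nonempty. Let $N^+[B]$ denote $B$ together with all out-neighbours of vertices of $B$, and put $Z=V\setminus N^+[B]$, the set of vertices not reachable from $B$ by a path of length at most $1$. Because $B\subseteq N^+[B]$ and $B\ne\emptyset$, $Z$ is a proper subset of $V$, so the inductive hypothesis applies to $G[Z]$ and yields a quasi-kernel $D$ of $G[Z]$.

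Next I would prune $B$: set $B'=\{b\in B:\text{no edge of }G\text{ runs from }D\text{ into }b\}$ and claim that $A=B'\cup D$ is a quasi-kernel of $G$. Independence of $A$ should be immediate: $B'\subseteq B$ is independent, $D$ is independent and in fact spans no edge of $G$ at all (as $G[Z]$ is an induced subgraph), there is no edge from $B$ into $Z$ — an out-neighbour of $B$ lies in $N^+[B]$, not in $Z$ — hence none from $B'$ to $D$, and there is no edge from $D$ to $B'$ by the definition of $B'$. For the domination condition I would partition $V$ into $Z$, $B$, and $N^+(B)\setminus B$: a vertex of $Z$ is reached by a path of length $\le 2$ already inside $G[Z]$ from $D$; a vertex $b\in B$ either lies in $B'$ (so it is in $A$, reached by the trivial path) or receives an edge from some $d\in D$, a path of length $1$; and a vertex $v\in N^+(B)\setminus B$ has some in-neighbour $b\in B$, so if $b\in B'$ the edge $b\to v$ does the job, while if $b\notin B'$ there is $d\in D$ with $d\to b$, and $d\to b\to v$ is a path of length $2$. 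This exhausts $V$, so the induction goes through.

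The only point that needs genuine care — and I expect it to be the crux — is the pruning: deleting the ``bad'' vertices of $B$ must not destroy reachability of $N^+(B)$, and it does not, precisely because a bad $b\in B$ is itself reached in one step from $D$, so whatever $b$ dominated in one step is now dominated in two steps and no farther. Everything else is bookkeeping. A natural alternative would be to recurse directly on the set of vertices at distance $\ge 2$ from $B$ and carry the distance-$2$ dominated vertices along; routing through $N^+[B]$ as above simply keeps the case analysis as short as possible.
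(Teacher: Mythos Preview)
Your argument is correct. The paper does not prove Theorem~\ref{tm:cl} directly; it is quoted from \cite{CH}. The only place the paper touches its proof is the remark after Corollary~\ref{cor:mono}, where it notes that the construction in Theorem~\ref{tm:general2}, specialized to a finite graph partitioned into singletons with $n=1$, ``coincides with the original Chv\'atal--Lov\'asz argument.'' Unwinding that specialization gives: pick a vertex $v$, set $V'=V\setminus \outc G1v$, recurse on $G[V']$ to obtain a quasi-kernel $A$, and output $\bar A=A\cup(\{v\}\setminus \outc G1A)$.

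Your proof follows exactly this template --- choose an independent set, delete its closed out-neighbourhood, recurse, then splice after pruning --- the only difference being that you start from a maximal independent set $B$ rather than a single vertex. In fact you never use maximality: the argument needs only that $B$ is nonempty (to make $Z\subsetneq V$) and independent (for the independence of $A$ and the disjointness of $B$ and $N^+(B)$). Taking $B$ a singleton recovers verbatim the classical proof alluded to in the paper. So your route is the same as the paper's (implicit) one, with a harmless cosmetic generalization.
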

\noindent Our aim is to find similar theorems for infinite digraphs. The plain
generalization of Theorem~\ref{tm:cl} fails even for infinite
tournaments: the tournament $\gra{\mathbb Z}{<}$ is a counterexample, 
where $\mathbb Z$
denotes the set of the integers,  and $(x,y)$ is an edge iff $x<y$.
However, not just for $\gra{\mathbb Z}{<}$ but for each tournament 
$G$ we have a partition $(V_0,V_1)$ of the vertex set of the tournament
such that the induced subgraph $G[V_0]$ has a quasi-kernel and 
the induced subgraph $G[V_1]$ has a quasi sink, see
Theorem \ref{tm:inft}. Moreover, all the infinite digraph we could construct 
have this property.  These observations led to formulate 
the following conjecture.

\begin{conjecture}\label{conjmain}
Given any digraph $G=(V,E)$ one can find a partition 
$(V_0,V_1)$ of the vertex set such that the induced subgraph 
$G[V_0]$ has a quasi-kernel and $G[V_1]$ has a quasi-sink.
\end{conjecture}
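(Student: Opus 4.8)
The plan is to attack the conjecture by iterating a ``local'' strengthening of the Chv\'atal--Lov\'asz theorem (Theorem~\ref{tm:cl}) to peel $G$ into pieces each carrying a quasi-kernel, and then to amalgamate those pieces into the two classes $V_0,V_1$; the tournament case (Theorem~\ref{tm:inft}) and the examples $(\mathbb Z,<)$ and $(\mathbb Q,<)$ indicate what the amalgamation must achieve. I should stress at the outset that I do not expect this plan to close in full generality --- that is precisely why the statement is a conjecture --- but the reductions and the first steps are solid and the remaining gap is, I believe, concentrated in one structural lemma.

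\textbf{Reductions and the engine.} It suffices to treat each weakly connected component of $G$ separately: since there are no edges between components, unions of the component-wise $V_0$'s and of the component-wise $V_1$'s partition $V$ correctly and the component-wise quasi-kernels/quasi-sinks assemble. By Theorem~\ref{tm:cl} every finite component is placed entirely in $V_0$, so assume $G$ is infinite. The engine is the observation that if $A$ is a \emph{maximal} independent set of a digraph $H$ and $U=U(A)$ is the set of vertices of $H$ not reachable from $A$ by a path of length $\le 2$, then $A\cap U=\emptyset$, every $u\in U$ has an out-neighbour in $A$, and $A$ is a quasi-kernel of $H[V(H)\setminus U]$ --- the last because any length-$2$ path $a\to x\to v$ from $A$ has $x$ reachable from $A$ in one step, hence $x\notin U$, so the witnessing path stays inside $V(H)\setminus U$. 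Dually, with $W=W(A)$ the set of vertices not \emph{reaching} $A$ within distance $2$: $A$ is a quasi-sink of $H[V(H)\setminus W]$, every $w\in W$ has an in-neighbour in $A$, and $U\cap W=\emptyset$ (a vertex with a neighbour in $A$ either reaches $A$ or is reached from $A$ in one step).

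\textbf{Peeling.} One application of the engine to $G$ gives $V_0:=V\setminus U(A)$ with quasi-kernel $A$, leaving us to find a quasi-sink of $G[U(A)]$; this is not always possible in one step --- e.g.\ for $G=(\mathbb Q,<)$ one has $U(\{0\})=\mathbb Q^{<0}$, which has no quasi-sink, although $(\mathbb Q,<)$ does admit a partition (take $V_0=\{0\}\cup(\mathbb Q\cap(1,\infty))$, with minimum $0$, and $V_1=\mathbb Q\setminus V_0$, with maximum $1$). So $U(A)$ must itself be decomposed, and I would iterate transfinitely: $H_0=G$, and $H_{\xi+1}=H_\xi[U(A_\xi)]$ for a maximal independent $A_\xi$ of $H_\xi$, with intersections at limits. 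The chain $V(H_\xi)$ is strictly decreasing (a nonempty digraph has a nonempty maximal independent set, which lies outside its own undominated remainder), so it reaches $V(H_\theta)=\emptyset$ for some $\theta$, yielding a partition $V=\bigsqcup_{\xi<\theta}P_\xi$ with $P_\xi=V(H_\xi)\setminus U(A_\xi)$ and each $G[P_\xi]$ carrying the quasi-kernel $A_\xi$. It remains to amalgamate the $P_\xi$ into two classes. The covering half of any amalgamation is automatic, since the length-$\le 2$ witnessing path inside a piece $P_\xi$ is finite and persists in any larger induced subgraph containing $P_\xi$; the independence half --- that one can choose the $A_\xi$ over each class so that their union is an independent set --- is the difficulty.

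\textbf{The crux.} Repairing independence within a class forces discards of vertices from some $A_\xi$, which create fresh re-covering obligations, and an infinite chain of such discards with no stable tail is precisely the obstruction that kills plain quasi-kernels in $(\mathbb Z,<)$. The partition is the one extra degree of freedom available: a discard cascade that is ``increasing'' should be routed to $V_0$ and a ``decreasing'' one to $V_1$. What is missing --- and what keeps this a conjecture --- is a structure theorem saying that the dangerous configurations in an arbitrary digraph are coherently oriented, so that a \emph{single} global cut of the peeling into two classes sends the increasing tails to $V_0$ and the decreasing tails to $V_1$; Theorem~\ref{tm:inft} is the tournament instance of such a statement, and $(\mathbb Z,<)$ and $(\mathbb Q,<)$ show the shape the cut must take. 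I would first try to carry this out for countable $G$, running the peeling along an $\omega$-enumeration of $V$ and discharging the re-covering obligations by hand at each finite stage; the general case I regard as standing or falling with that structure theorem, which is the main obstacle.
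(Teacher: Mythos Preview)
The statement is Conjecture~\ref{conjmain}, which the paper explicitly leaves open; there is no proof in the paper to compare against. The paper establishes the conjecture only for restricted classes --- digraphs with $K_n\not\subset\compl G$ (Theorem~\ref{tm:kn}), digraphs with $\compl G$ locally finite (Theorem~\ref{tm:clocally}), digraphs of the form $G^\infty$ (Theorem~\ref{tm:ginf}) --- and its main tools are the stepping-up lemmas Theorems~\ref{tm:general2} and~\ref{tm:general}, not a transfinite peeling of the kind you outline.

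Your reductions and your ``engine'' are correct: the claims that $A\cap U(A)=\emptyset$, that every $u\in U(A)$ has an out-neighbour in $A$, that $A$ is a quasi-kernel of $H[V(H)\setminus U(A)]$, and that $U(A)\cap W(A)=\emptyset$ all check out, and the transfinite peeling does terminate and partition $V$ into pieces $P_\xi$ each carrying a quasi-kernel $A_\xi$. You are also right that the covering half of any amalgamation is free while the independence half is the obstruction. But the plan stalls exactly where you say it does, and honestly so: in a general digraph there is no reason for the ``discard cascades'' to be coherently orientable by a single cut of the ordinal index set into two classes. Theorem~\ref{tm:inft} works because in a tournament a single vertex already dominates or is dominated by every other vertex, which short-circuits any cascade after one step; for arbitrary digraphs the interactions among the $A_\xi$ can be arbitrarily tangled, and the peeling records no structure that would control them. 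Your proposed ``structure theorem'' is not a lemma one has overlooked --- it is essentially a restatement of the conjecture itself. So your assessment is accurate: the gap is real, it is the heart of the problem, and neither your proposal nor the paper closes it.
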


The paper is organized as follows.
In Section \ref{s:step} we prove some easy results showing
that digraphs that  resemble
to a finite graph 
have quasi-kernels:
digraphs with finite in-degrees (Corollary \ref{cor:cl}) and 
digraphs with finite chromatic number (Corollary \ref{cor:mono})
have quasi-kernels.

We prove the conjecture above   for  digraphs 
that resemble to a tournament (Theorems \ref{tm:kn}  and
\ref{tm:clocally}), or that are built up from simple blocks (Corollary
\ref{cor:part}). 

In Section \ref{s:tour} we study the structures of infinite tournaments,
especially of tournaments without quasi-kernels.   
For  $n\in \mathbb N$
denote $\outcl n$ the family of  digraphs $G=(V,E)$
having an independent  set  $A\subs V$ 
such that  for each point $v$ there is a path of length at
most $n$ from some point of $A$ to $v$.  
In Section \ref{s:tour} for each $n\ge 3$ we could  
characterize infinite tournaments from $\outcl n$, see
Theorem \ref{tm:nott}. This characterization will imply immediately
that  the classes 
$\outcl 3, \outcl 4, \dots$ contain the same tournaments!
On the other hand,  
we show that $\outcl 2$  and $\outcl 3$ contain different
tournaments  (see Theorem \ref{tm:o3no2}), 
but the proof demands the development 
a recursive method to construct  infinite digraphs from certain
finites ones   (see Section \ref{s:rec}). One could hope that this method may help to disprove our conjecture, but this is not the case, because, in Theorem 
\ref{tm:ginf} we will show that 
 all the
digraphs obtained by this method also satisfy  the Conjecture 
\ref{conjmain}
above.

Finally, in Section \ref{s:haj} we give a weak version  
of Theorem \ref{tm:cl} for infinite digraphs. This result is a joint work
with Andr\'as Hajnal, and it is included with his kind permission.

\medskip

We will use standard combinatorial and set-theoretical notations. 
If
$V$ is a set then let $V^*$ be the family of finite sequences of
elements of $V$. If $a,b \in V^*$ then denote $a\concat b$ the {\em
concatenation} of the two sequences. If $A,B\subs V^*$ let $
A\concat B =\{a\concat b:a\in A, b\in B\} $. If $x\in V^*$
write $A\concat x$ for $A\concat \{x\}$.  The family of two
elements subsets of $V$ will be denoted by $\br V;2;$.

If $G=(V,E)$ is a digraph and $W\subs V$ denote 
the induced subgraph of $G$ on $W$ by $G[W]$, i.e. 
$G[W]=(W,E\cap W\times W)$.

To simplify the formulation of our results we introduce the
following terminology. Assume that  $G=(V,E)$ is a digraph and
$A\subs V$. For $n\in \mathbb N$ let
\begin{eqnarray*}
\inc GnA=\{v\in V:\text{there is a path of length at most $n$ } \\
\text{which leads from $v$ to some points of $A$}\}
\end{eqnarray*}
and
\begin{eqnarray*}
\outc GnA=\{v\in V:\text{there is a path of length at most $n$ } \\
\text{which leads from some points of $A$ to $v$}\}.
\end{eqnarray*}
Put
\begin{displaymath}
\outc G\infty A=\bigcup\{\outc GnA:n\in {\mathbb N}\}
\end{displaymath}
and
\begin{displaymath}
\inc G\infty A=\bigcup\{\inc GnA:n\in {\mathbb N}\}.
\end{displaymath}
If $A=\{a\}$ we write $\inc Gn{a}$ for $\inc Gn{\{a\}}$, and $\outc
Gn{a}$ for $\outc Gn{\{a\}}$. We will omit the superscript $G$
provided the digraph is clear from the context.

Using the notation above we can rephrase the definition of the
the classes $\outcl 2, \outcl 3, \dots$ 
and we can define the classes $\outcl \infty$, 
$\incl 2$, $ \incl3, \dots$ and  $\incl \infty$ of digraphs as follows.  
For $n\in \mathbb N\cup \{\infty\}$ let a digraph $G=(V,E)$ be in $ \incl n$ 
iff 
there is an independent set $A\subs V$ such that $V=\inc GnA$, and
let $G\in \outcl n$  
iff there is an independent set $B\subs V$
such that $V=\outc GnB$. We will say that ``{\em $A$  witnesses
$G\in \incl n $}'' and ``{\em $B$  witnesses $G\in \outcl n $}''.

If $n,k\in \mathbb N\cup\{\infty\}$ define the class $\inout nk$ of
digraphs as follows: let $G\in \inout nk$ if and only if there is a
partition $(V_1, V_2)$ of the vertex set $V$ such that $G[V_1]\in
\incl n$ and $G[V_2]\in \outcl k$. We will say that ``{\em
$(V_1,V_2)$ witnesses $G\in \inout nk$}''.

\noindent Using this new terminology we can formulate  Theorem
of Chv\'atal and Lov\'asz and our Conjecture 
 as follows:

\begin{rtheorem}
Every finite digraph is in $\outcl 2$,  
\end{rtheorem}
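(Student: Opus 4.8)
The plan is to prove the statement by induction on the number of vertices, reproducing the classical Chv\'atal--Lov\'asz argument in the present notation. If $V=\emptyset$ then the empty set witnesses $G\in\outcl 2$ vacuously, so assume $V\ne\emptyset$ and fix an arbitrary $x\in V$. Delete from $G$ the vertex $x$ together with all of its out-neighbours, i.e.\ pass to the induced subgraph $G'=G[V\setminus\outc G1x]$. Since $x\in\outc G1x$, the digraph $G'$ has strictly fewer vertices, so by the inductive hypothesis there is an independent set $A'$ in $G'$ with $V(G')=\outc{G'}2{A'}$. As $G'$ is an induced subgraph of $G$, the set $A'$ is independent in $G$ as well, and every path of length at most $2$ witnessing membership in $\outc{G'}2{A'}$ is also a path in $G$; hence $V(G')\subseteq\outc G2{A'}$.

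Now I would distinguish two cases according to whether some vertex of $A'$ sends an edge to $x$. First suppose there is $a\in A'$ with $(a,x)\in E$. Then already $A'$ witnesses $G\in\outcl 2$: the vertices of $G'$ are reached from $A'$ within two steps by the previous paragraph, $x$ is reached from $a$ in one step, and every out-neighbour $y$ of $x$ is reached along the length-$2$ path $a\to x\to y$; since $V=V(G')\cup\outc G1x$, this gives $V=\outc G2{A'}$. In the remaining case no vertex of $A'$ sends an edge to $x$; put $A=A'\cup\{x\}$. This set is independent, since there is no edge from $x$ to any $a\in A'$ (such an edge would place $a$ in $\outc G1x$, whereas $a\in V(G')$) and no edge from any $a\in A'$ to $x$ by the case hypothesis. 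Moreover $A$ witnesses $G\in\outcl 2$: vertices of $G'$ are reached from $A'\subseteq A$ within two steps, $x$ lies in $A$, and every out-neighbour of $x$ is reached from $x$ in one step. In either case $G\in\outcl 2$, completing the induction.

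The argument is essentially bookkeeping once the correct set $\outc G1x$ is removed; the only points needing a moment's care are that deleting precisely the out-neighbourhood of $x$ (and not, say, its in-neighbourhood) is what makes the two cases exhaustive, and that independence and the path-length bound are preserved when passing between $G$ and $G'$. I do not expect a genuine obstacle. Finiteness of $V$ enters only to make the induction terminate, which is exactly the feature that fails for $\gra{\mathbb Z}{<}$ and motivates Conjecture~\ref{conjmain}; a direct extremal approach (choosing an independent $A$ with $\outc G2A$ of maximum size) also seems possible, but the obvious candidates such as an arbitrary maximal independent set need not work, so the inductive deletion argument looks cleanest.
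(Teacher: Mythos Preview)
Your argument is correct and is exactly the classical Chv\'atal--Lov\'asz proof. The paper does not give its own proof of this statement---it is quoted as the known theorem of Chv\'atal and Lov\'asz with a citation to \cite{CH}---and in fact explicitly remarks (after Corollary~\ref{cor:mono}) that the construction in Theorem~\ref{tm:general2}, when specialized to finite graphs, ``coincides with the original Chv\'atal--Lov\'asz argument,'' which is precisely the delete-$\outc G1x$ induction you wrote down.
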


\begin{rconjecture}
 Every digraph is in $\inout 22$. 
\end{rconjecture}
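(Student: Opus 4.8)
\medskip
\noindent
\textbf{A plan of attack.}
Reversing every edge of $G$ interchanges $\incl 2$ with $\outcl 2$, so the class $\inout 22$ is self-dual and it is enough to understand the quasi-kernel side. The useful first remark is that one never has to build a quasi-kernel of the \emph{whole} of $G$: if $A$ is any maximal independent set of $G$ and we put $V_0:=\outc G2A$, then $A$ already witnesses $G[V_0]\in\outcl 2$, since any $(\le 2)$-path $a\to u\to v$ with $a\in A$ has $u\in\outc G1A\subseteq\outc G2A=V_0$ and $v\in V_0$, hence stays inside $G[V_0]$. So the conjecture is equivalent to: choose the partition so that, after peeling off for the kernel side a set of the form $\outc G2A$ (or, more loosely, a set on which some independent set serves as a quasi-kernel), the remaining vertices induce a digraph in $\incl 2$. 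Several cases fall out immediately: by Corollaries~\ref{cor:cl} and \ref{cor:mono} a digraph with finite in-degrees, or with finite chromatic number, is already in $\outcl 2\subseteq\inout 22$ (take $V_1=\emptyset$), and dually a digraph with finite out-degrees is in $\incl 2\subseteq\inout 22$; tournament-like digraphs are covered by Theorems~\ref{tm:kn} and \ref{tm:clocally}, block digraphs by Corollary~\ref{cor:part}, and the recursively built digraphs by Theorem~\ref{tm:ginf}. Thus the genuine target is a digraph in which every vertex has infinite in- and out-degree and which is of none of these shapes.

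For such a $G$ the plan is a transfinite recursion along a well-ordering $\langle v_\xi:\xi<\kappa\rangle$ of $V$. At stage $\xi$ one maintains a partition of a set $W_\xi\subseteq V$ into $A_\xi$ (anchors of the kernel side), $C_\xi$ (kernel side, already reached from $A_\xi$ by a $(\le 2)$-path inside the current kernel side), $B_\xi$ (anchors of the sink side) and $D_\xi$ (sink side, already reaching $B_\xi$ by a $(\le 2)$-path inside the current sink side). On meeting $v_\xi$ one decides its side and, if it is not yet served, either promotes it to an anchor --- legal only if it is non-adjacent to every current anchor on that side --- or joins it to an existing anchor by a $(\le 2)$-path, pulling the required intermediate vertex into the matching piece; one does the symmetric bookkeeping for vertices that will later need to route through $v_\xi$. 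The invariants to keep are: $A_\xi$ and $B_\xi$ are independent; every kernel-side vertex of $W_\xi$ is $(\le 2)$-served by $A_\xi$ within the current kernel side and, dually, every sink-side vertex $(\le 2)$-reaches $B_\xi$ within the current sink side; and a \emph{rescuability} condition ensuring that every vertex still outside $W_\xi$ has at least one legal future placement. Unions at limits and at the end then produce the desired $(V_0,V_1)$, provided the invariants persist.

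The hard part --- and, I expect, the reason the statement remains a conjecture --- is establishing rescuability. The danger is a \emph{doubly trapped} vertex $w$: it is adjacent to some kernel anchor, hence cannot be an anchor there, yet no $(\le 2)$-in-path from the current kernel anchors stays on the kernel side and reaches it, and the situation on the sink side is symmetrically hopeless. Tournaments cannot fail in this way, which is precisely what drives Theorems~\ref{tm:kn} and \ref{tm:nott} and the structure theory of Section~\ref{s:tour}; for an arbitrary digraph one would need comparable control over $2$-step reachability --- for instance a condensation of $V$ into levels with respect to the preorder $u\preceq v\iff v\in\outc G2u$, together with a recursion order that handles the most endangered vertices first. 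A clean route to a complete proof would be to isolate a dichotomy extending the tournament picture: every digraph either has a quasi-kernel outright, or contains a $\gra{\mathbb Z}{<}$-like obstruction along which the vertex set splits into a part carrying a quasi-kernel and a part carrying a quasi-sink. Proving such a dichotomy for all digraphs is the main obstacle, and it does not seem reachable with the methods developed in the present paper.
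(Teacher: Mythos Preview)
The statement you are addressing is Conjecture~\ref{conjmain}, and the paper does \emph{not} prove it: it is genuinely open, and the paper's contribution is to verify it for several special classes (Corollaries~\ref{cor:cl}, \ref{cor:mono}, \ref{cor:part}, Theorems~\ref{tm:kn}, \ref{tm:clocally}, \ref{tm:ginf}). So there is no ``paper's own proof'' to compare against, and your proposal is not a proof either --- as you yourself conclude in your final paragraph. What you have written is an honest outline of a natural attack together with a correct diagnosis of where it breaks down; that is perfectly in line with the paper's own stance, but it should not be presented as a proof of the conjecture.

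Two specific comments on the content. First, the reduction in your opening paragraph is correct: if $A$ is maximal independent and $V_0=\outc G2A$, then any $(\le2)$-path from $A$ to a vertex of $V_0$ stays inside $V_0$, so $A$ witnesses $G[V_0]\in\outcl 2$. This is a clean observation, but it does not reduce the conjecture in any usable way, since one has no control over $G[V\setminus V_0]$; the paper's proof of Theorem~\ref{tm:kn} in fact begins exactly this way and then has to work hard on the complement. Second, the transfinite recursion you sketch is the natural thing to try, and your identification of the obstruction --- a vertex that is adjacent to some anchor on each side yet unreachable by a $(\le2)$-path within either side --- is exactly the difficulty. Nothing in the paper excludes this configuration for general digraphs, and neither does your argument; the ``dichotomy extending the tournament picture'' that you call for is precisely what is missing, and it is not supplied here.
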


\section{When $G$ resembles to a finite graph}
\label{s:step}

\noindent By a standard application of 
 G\"odel's Compactness Theorem one can get the
following  straightforward corollary of Theorem \ref{tm:cl}   for
infinite graphs:
\begin{corollary}\label{cor:cl}
If
 every vertex has finite in-degree in a digraph  $G$ 
 then $G$ has a quasi-kernel.
\end{corollary}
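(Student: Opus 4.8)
The plan is to encode ``$A$ is a quasi-kernel of $G$'' as a propositional satisfiability problem and then invoke G\"odel's Compactness Theorem, reducing the infinite instance to finite induced subgraphs that are handled by Theorem~\ref{tm:cl}.

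First I would introduce, for each vertex $v\in V$, a propositional variable $x_v$ whose intended meaning is ``$v\in A$''. There are two families of clauses. \emph{Independence}: for every edge $(u,v)\in E$, the clause $\neg x_u\vee\neg x_v$. \emph{Domination}: for every vertex $v$, a clause expressing $v\in\outc G2A$, namely the disjunction of $x_v$, of $x_u$ over all $u$ with $(u,v)\in E$, and of $x_u$ over all $u$ for which there is a $w$ with $(u,w),(w,v)\in E$. The crucial point is that this last disjunction is \emph{finite}: since every vertex has finite in-degree, $N^-(v)$ is finite, and each $N^-(w)$ for $w\in N^-(v)$ is finite, so the set of vertices reaching $v$ by a path of length at most $2$ is a finite union of finite sets. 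Hence every clause is a genuine (finite) propositional formula, and compactness is applicable.

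Next I would check that every finite subset $\Phi_0$ of this clause set is satisfiable. Let $W\subseteq V$ be the finite set of vertices whose variables occur in $\Phi_0$. By the form of the domination clauses, whenever the domination clause for a vertex $v$ belongs to $\Phi_0$, the entire length-$\le 2$ in-neighbourhood of $v$ in $G$ is contained in $W$. Apply Theorem~\ref{tm:cl} to the finite digraph $G[W]$ to obtain a quasi-kernel $A_0\subseteq W$, and set $x_v$ true exactly for $v\in A_0$ (false off $W$). Since the edges of $G[W]$ are edges of $G$, the set $A_0$ is independent in $G$, so all independence clauses of $\Phi_0$ hold; and a length-$\le 2$ path in $G[W]$ from $A_0$ to $v$ witnesses the corresponding disjunct of the domination clause for $v$. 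Thus $\Phi_0$ is satisfied.

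Finally, by the Compactness Theorem the whole clause set has a model; putting $A=\{v: x_v\text{ is true}\}$ yields an independent set with $V=\outc G2A$, i.e.\ a quasi-kernel, so $G\in\outcl 2$. I do not anticipate any real obstacle here: the single place where the hypothesis is essential is ensuring that the domination clauses are finite formulas, and the remaining work is the routine bookkeeping of compactness — one only needs to be mildly careful that passing to $G[W]$ loses nothing relevant, which holds because both independence and reachability by paths of length at most $2$ among vertices of $W$ are inherited in both directions between $G$ and $G[W]$.
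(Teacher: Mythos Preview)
Your argument is correct and is exactly the ``standard application of G\"odel's Compactness Theorem'' that the paper invokes without spelling out; you have simply written out the propositional encoding and the verification that finite subsets are satisfiable via Theorem~\ref{tm:cl}. The one remark I would make is that your observation that the whole length-$\le 2$ in-neighbourhood of $v$ lies in $W$ is not actually needed: a quasi-kernel $A_0$ of $G[W]$ reaches every $v\in W$ by a length-$\le 2$ path \emph{inside} $G[W]$, and that path is automatically a path in $G$, so its starting point is one of the disjuncts of the domination clause for $v$ regardless.
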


Next we prove two stepping-up theorems. 
The first will imply immediately that every finitely chromatic
digraph has quasi-kernel.  
The second one will be applied mainly later, in the next section.

\begin{definition}
A directed graph $G$ is {\em  hereditary in $\outcl n$ $($hereditary
in $\inout mn$ $)$} if and only if the  induced subgraphs of $G$ are
all in $\outcl n$ (in $\inout mn$).
\end{definition}
\begin{theorem}\label{tm:general2}
Let $n\ge1$ and assume  that $G=(V,E)$ is  a directed graph
having a  partition
$(V_0,V_1,...,V_k)$ of $V$ such that
\begin{enumerate}[{\rm (i)}]
\item $G[V_0]$ is hereditary in  $\outcl {n+1}$,
\item  $G[V_i]$ is hereditary in $\outcl n $ for $1\le i<k$,
\item $G[V_k]$ is in $\outcl n$ provided $k\ge 1$,
\end{enumerate}
 Then $G$ is  $\outcl {n+1}$.
\end{theorem}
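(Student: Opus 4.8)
The plan is to induct on $k$. When $k=0$ the claim is immediate from (i), so assume $k\ge1$ and that the statement holds for smaller values of $k$. Using (iii), fix an independent set $B_k\subs V_k$ with $V_k=\outc{G[V_k]}{n}{B_k}$; since every path of $G[V_k]$ is a path of $G$, this gives $V_k\subs\outc Gn{B_k}$.

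Next I would set $W=(V_0\cup\dots\cup V_{k-1})\setm\outc Gn{B_k}$ and consider the partition $(W\cap V_0,\dots,W\cap V_{k-1})$ of $W$. Since ``hereditary in $\outcl m$'' passes to induced subgraphs, $G[W\cap V_0]$ is hereditary in $\outcl{n+1}$, $G[W\cap V_i]$ is hereditary in $\outcl n$ for $1\le i\le k-2$, and $G[W\cap V_{k-1}]$ is in $\outcl n$ (it is an induced subgraph of $G[V_{k-1}]$, which is hereditary in $\outcl n$ by (ii), as $k-1<k$); when $k=1$ this last layer does not occur and $W\cap V_0=W$. So the inductive hypothesis, applied to $G[W]$ with $k$ replaced by $k-1$, yields an independent $B'\subs W$ with $W=\outc{G[W]}{n+1}{B'}\subs\outc G{n+1}{B'}$.

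Now put $B_k^{*}=B_k\setm\outc G1{B'}$ and $Q=B'\cup B_k^{*}$, and I claim $Q$ witnesses $G\in\outcl{n+1}$. For independence: $B'$ and $B_k^{*}$ are each independent; no edge runs from $B'$ into $B_k^{*}$ by the choice of $B_k^{*}$; and an edge from some $b\in B_k^{*}\subs B_k$ to some $b'\in B'\subs W$ would force $b'\in\outc G1{B_k}\subs\outc Gn{B_k}$ (here $n\ge1$ is used), contradicting $b'\in W$. For coverage, let $u\in V$. If $u\notin\outc Gn{B_k}$ then $u\notin V_k$, hence $u\in W\subs\outc G{n+1}{B'}\subs\outc G{n+1}{Q}$. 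If $u\in\outc Gn{B_k}$, fix a path of length at most $n$ from some $b\in B_k$ to $u$; if $b\in B_k^{*}\subs Q$ this path already shows $u\in\outc G{n+1}{Q}$, and otherwise $b\in\outc G1{B'}$, so (as $b\in V_k$ while $B'\subs W$) $b$ is a $G$-out-neighbour of some $b'\in B'$, and prepending the edge $b'\to b$ gives a path of length at most $n+1$ from $Q$ to $u$.

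The single delicate point, and the only place the slack between $\outcl n$ and $\outcl{n+1}$ is spent, is the passage from $B_k$ to $B_k^{*}$: deleting from $B_k$ the vertices reachable from $B'$ in one step is exactly what restores independence, and it lengthens only paths of length $\le n$, by one edge — which is why the ``already handled'' region must be taken to be $\outc Gn{B_k}$ rather than $\outc G{n+1}{B_k}$. With this choice no vertex is orphaned (a demoted $b\in B_k$ is reached from $B'$ in one step, and anything $b$ used to reach within $n$ steps is now reached within $n+1$), and the rest is routine bookkeeping.
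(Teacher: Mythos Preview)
Your proof is correct and follows essentially the same inductive strategy as the paper: peel off $V_k$ via its $\outcl n$ witness, recurse on the remainder, and merge the two witnesses by replacing $B_k$ with $B_k\setminus\outc G1{B'}$. The only cosmetic difference is that you delete the larger set $\outc Gn{B_k}$ before recursing (the paper deletes only $\outc G1{A_k}$); your choice has the mild advantage that $W$ is automatically disjoint from $V_k$, so $(W\cap V_0,\dots,W\cap V_{k-1})$ is visibly a partition of $W$.
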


\begin{proof}
By induction on $k$. For $k=0$ it is trivial. Assume now that the
statement is true for $k-1$ and prove it for $k$.

By (iii) $V_k=\outc {G[V_k]}n{A_k}$ for some independent sets
$A_k\subs V_k$. Put $V'=V\setm \outc G1{A_k}$ and let $V'_i=V_i\cap
V $ for $0\le i'< k$. Then we can apply the inductive hypothesis for
$G'=G[V']$ because (i) and (ii) imply that  the partition
$(V'_0,V'_1, \dots, V'_{k-1})$ satisfies (i)--(ii). Thus $V'$
contains an independent set $A$ such that $V'=\outc
{G[V']}{n+1}{A}$.

Let $\bar A=A\cup(A_k\setm \outc G1A)$. Then $\bar A$ is independent
because $\outc G1{A_k}\cap A\subs \outc G1{A_k}\cap V'=\empt$,
moreover $A_k\subs \outc G1{\bar A}$ and so $\outc G1{A_k}\subs
\outc G2{\bar A}$. Since $n+1\ge2$ it follows that $V=\outc
{G}{n+1}{\bar A}$.
\end{proof}
\noindent This result gives us the following direct generalization
of the Chv\'atal-Lov\'asz theorem:
\begin{corollary}\label{cor:mono}
If the chromatic number of  $G$ is finite then
$G\in \outcl 2$.
\end{corollary}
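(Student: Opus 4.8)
The plan is to translate the hypothesis into a finite partition and then invoke Theorem~\ref{tm:general2}. Recall that $G=(V,E)$ having finite chromatic number means, by definition, that the underlying undirected graph of $G$ admits a proper colouring with finitely many colours; equivalently, $V$ can be written as a disjoint union $V_0\cup V_1\cup\dots\cup V_k$ of (possibly empty) independent sets of $G$.

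First I would record the trivial fact that an independent set behaves as well as possible for our purposes. If $W\subs V$ is independent then $G[W]$ has no edges at all, and the same holds for every induced subgraph of $G[W]$. Now for an edge-free digraph $H=(W,\empt)$ the set $A=W$ is independent and $W=\outc H0{W}$, since each $w\in W$ is reached from $A$ by the trivial path of length $0$; hence $H\in\outcl 0\subs\outcl n$ for every $n$. Therefore $G[W]$ is hereditary in $\outcl n$ for every $n\ge 0$. Applying this to the pieces of the colouring: $G[V_0]$ is hereditary in $\outcl 2$, each $G[V_i]$ for $1\le i<k$ is hereditary in $\outcl 1$, and $G[V_k]\in\outcl 1$.

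It then remains only to apply Theorem~\ref{tm:general2} with $n=1$ to the partition $(V_0,V_1,\dots,V_k)$: conditions (i)--(iii) are exactly what was checked in the previous paragraph, so the theorem gives $G\in\outcl 2$. (The degenerate case $k=0$, i.e. $G$ itself edge-free, is just the base case of that theorem.) There is essentially no obstacle here; the only point that needs a word of care is the convention that a vertex is joined to itself by a path of length $0$, which is what places an edge-free graph in $\outcl 0$ and so makes it feed correctly into the inductive hypothesis of Theorem~\ref{tm:general2}.
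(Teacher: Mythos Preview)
Your proof is correct and follows exactly the paper's approach: partition $V$ into finitely many independent colour classes, note that each such class (and hence every induced subgraph thereof) is trivially in $\outcl 1$, and apply Theorem~\ref{tm:general2} with $n=1$. The paper states this in one line (``the monochromatic classes are empty, so they are hereditary in $\outcl 1$''); you simply spell out the path-of-length-$0$ convention more carefully.
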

\begin{proof}
Indeed, the monochromatic classes are empty, so they are hereditary
in $\outcl 1$. Thus we can apply  Theorem \ref{tm:general2} to yield
$G\in \outcl 2$.
\end{proof}

\noindent Unfortunately Theorem \ref{tm:general2} does not give a
new proof to Theorem \ref{tm:cl} because for  finite graphs our
construction coincides with the original Chv\'atal-Lov\'asz argument.

The following theorem is  mainly a technical tool for later use
(proving Corollary \ref{cor:part}, Theorems \ref{tm:kn} and \ref{tm:clocally}).
\begin{theorem}\label{tm:general}
Let  $\ell,m\ge 1$ and assume that
 $G=(V,E)$ is  a directed graph having a partition
$(V_0,V_1,...,V_k)$ of $V$ such that
\begin{enumerate}[{\rm (i)}]
\item $G[V_0]$ is hereditary in  $\inout {m+1}{\ell+1}$,
\item $G[V_i]$ is hereditary in $\inout m\ell $ for $1\le i<k$,
\item $G[V_k]$ is in $\inout m\ell$ provided $k\ge 1$,
\end{enumerate}
 Then $G$ is in $\inout {m+1}{\ell+1}$.
\end{theorem}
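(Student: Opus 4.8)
The plan is to mimic closely the inductive argument of Theorem~\ref{tm:general2}, carrying along the extra ``$\mathfrak{In}$'' part in parallel with the ``$\mathfrak{Out}$'' part. The induction is on $k$; the case $k=0$ is immediate since then $G=G[V_0]$ is itself (hereditarily) in $\inout{m+1}{\ell+1}$. For the inductive step, use (iii) to fix a partition $(P_k,Q_k)$ of $V_k$ witnessing $G[V_k]\in\inout m\ell$, so that $G[P_k]\in\incl m$ via an independent set $C_k\subs P_k$ with $P_k=\inc{G[P_k]}m{C_k}$, and $G[Q_k]\in\outcl\ell$ via an independent set $A_k\subs Q_k$ with $Q_k=\outc{G[Q_k]}\ell{A_k}$.

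The delicate point—and what I expect to be the main obstacle—is that the ``$\mathfrak{In}$'' direction and the ``$\mathfrak{Out}$'' direction want to peel off vertices on opposite sides of the removed set, so one has to be careful that the two recursions do not interfere. I would handle the $\mathfrak{Out}$ part exactly as in Theorem~\ref{tm:general2}: remove $\outc G1{A_k}$ and absorb $A_k$ into the $\inout{m+1}{\ell+1}$-witness for the smaller graph, gaining one in the ``distance at most $\ell+2$'' budget. Symmetrically, for the $\mathfrak{In}$ part I would remove $\inc G1{C_k}$ (the in-neighbourhood of $C_k$ within the whole of $G$, not just within $V_k$) and absorb $C_k$ into the ``$\mathfrak{In}$'' side of the witness for the reduced graph; this is legitimate because the ``hereditary'' hypotheses (i)–(ii) guarantee that after deleting both $\outc G1{A_k}$ and $\inc G1{C_k}$ from $V_0,\dots,V_{k-1}$ the truncated partition still satisfies (i)–(ii), so the inductive hypothesis applies to $G'=G[V\setm(\outc G1{A_k}\cup\inc G1{C_k})]$ and yields a witnessing partition $(P',Q')$ with $P'=\inc{G[P']}{m+1}{C}$ and $Q'=\outc{G[Q']}{\ell+1}{A}$.

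It then remains to assemble the final partition of $V$. I would set $\bar P=C\cup P'\cup(C_k\setm\inc G1C)$ together with the part of $\inc G1{C_k}$ that lies ``below'' $C_k$, and dually $\bar Q=A\cup Q'\cup(A_k\setm\outc G1A)\cup(\text{the part of }\outc G1{A_k}\text{ above }A_k)$, checking that $(\bar P,\bar Q)$ is a genuine partition of $V$ and that $\bar C:=C\cup(C_k\setm\inc G1C)$ is independent in $G[\bar P]$ (using $\inc G1{C_k}\cap C\subs\inc G1{C_k}\cap V'=\empt$, just as in the proof of Theorem~\ref{tm:general2}) and witnesses $G[\bar P]\in\incl{m+1}$, and symmetrically for $\bar Q$. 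The bookkeeping of which leftover vertices of $\inc G1{C_k}$ and $\outc G1{A_k}$ go to which side is the only real content; once the partition is chosen so that $C_k$ and $A_k$ each end up together with all the length-$1$ witnesses they need, the distance estimates are immediate: every vertex of $P_k$ is within distance $m$ of $C_k$ hence within distance $m+1$ of $\bar C$ in $G[\bar P]$, and dually on the other side, while $P'$ and $Q'$ are already covered with room to spare since $m+1,\ell+1\ge 2$. I would close by remarking, as the authors do after Theorem~\ref{tm:general2}, that this is exactly what is needed to feed into Corollary~\ref{cor:part} and Theorems~\ref{tm:kn} and~\ref{tm:clocally}.
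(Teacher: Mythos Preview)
Your approach is essentially identical to the paper's: induct on $k$, peel off $\outc G1{A_k}\cup\inc G1{C_k}$, apply the inductive hypothesis to the remainder, and then reassemble by adjoining $A_k\setm\outc G1A$ and $C_k\setm\inc G1C$ to the respective witnessing sets. The paper makes the final partition explicit as $\bar X=Q_k\cup Q'\cup(\outc G1{A_k}\setm V_k)$ and $\bar Y=P_k\cup P'\cup(\inc G1{C_k}\setm(V_k\cup\outc G1{A_k}))$ (in your notation), which is exactly the bookkeeping you flag as ``the only real content''; note in particular that all of $P_k$ (not just $C_k\setm\inc G1C$) must go into $\bar P$, and that the overlap $\inc G1{C_k}\cap\outc G1{A_k}$ is assigned to the $\mathfrak{Out}$ side.
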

\begin{proof}
We use 
induction on $k$. For $k=0$ the statement is
trivial. Assume that it is true for $k-1$ and prove it for $k$.

Let $(X_k,Y_k)$ be a $\inout m\ell$-partition of  $G[V_k]$, i.e.
$X_k=\outc {G[X_k]}\ell{A_k}$ and $Y_k=\inc {G[Y_k]}m{B_k}$ for some
independent sets $A_k$ and $B_k$. Put $V'=V\setm (\outc G1{A_k}\cup
\inc {G}1{B_k})$ and let $V'_i=V_i\cap V $ for $0\le i'< k$.

Then we can apply the inductive hypothesis for $G'=G[V']$ because
the partition $(V'_0,V'_1, \dots, V'_{k-1})$ satisfies (i)--(iii).
Thus $V'$ has a partition $(X,Y)$ and there are 
independent
sets $A\subs X$ and $B\subs Y$ such that $X=\outc
{G[X]}{\ell+1}{A}$ and $Y=\inc {G[Y]}{m+1}{B}$.

Let
\begin{eqnarray*}
\bar X & = &X_k\cup X\cup (\outc G1{A_k}\setm V_k),\\
\bar Y & = &Y_k\cup Y \cup (\inc G1{B_k}\setm (V_k\cup \outc
G1{A_k})),\\
\bar A & = & A\cup(A_k\setm \outc G1A),\\
 \bar B & = & B\cup(B_k\setm \inc G1B).
\end{eqnarray*}
Then $(\bar X,\bar Y)$ is a partition of $V$, $\bar A$ and $\bar B$
are independent, moreover
\begin{eqnarray*}
\bar X & = &\outc {G[\bar X]}{\ell+1}{\bar A} \qquad \hbox{and}\\
\bar Y & = &\inc {G[\bar Y]}{m+1}{\bar B}.
\end{eqnarray*}
\end{proof}
\noindent  When we started to study the problem (which later became
to Conjecture \ref{conjmain}) there were attempts to construct digraphs
$\not \in \inout 22 $ from ingredients like $\gra {\mathbb Z}<$. The
next statement shows that it is not possible:
\begin{corollary}\label{cor:part} If $G$ has a partition
$(A_1,\dots, A_k)$ such that each $G[A_i]$ is hereditary in $ \inout
11$ (for example $G[A_i]$ is isomorphic to  $\gra {\mathbb Z}<$, to
$\gra {\mathbb N}<$, to $\gra {\mathbb N}>$, or to a graph without
edges) then $G\in \inout 22$.
\end{corollary}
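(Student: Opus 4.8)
The plan is to read the corollary off Theorem~\ref{tm:general} with $m=\ell=1$. Given the partition $(A_1,\dots,A_k)$ of $V$, I would feed Theorem~\ref{tm:general} the augmented partition $(V_0,V_1,\dots,V_k):=(\emptyset,A_1,\dots,A_k)$ of $V$ (this is merely to produce a block $V_0$ of the shape required there). Hypothesis (i) asks that $G[V_0]$ be hereditary in $\inout 22$; since $V_0=\emptyset$, its only induced subgraph is the empty digraph, and the empty digraph is in $\inout 22$ because the partition $(\emptyset,\emptyset)$ works, the independent set $\emptyset$ witnessing both $\emptyset\in\incl 2$ and $\emptyset\in\outcl 2$. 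Hypothesis (ii) asks that $G[V_i]=G[A_i]$ be hereditary in $\inout 11$ for $1\le i<k$, which is exactly the assumption of the corollary; and hypothesis (iii) asks, when $k\ge 1$, that $G[V_k]=G[A_k]\in\inout 11$, which follows because $G[A_k]$ is an induced subgraph of itself and $G[A_k]$ is hereditary in $\inout 11$. Theorem~\ref{tm:general} then yields $G\in\inout{m+1}{\ell+1}=\inout 22$. (The case $V=\emptyset$ is trivial and also falls under the above with $k=0$.)

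For the parenthetical list of examples, one checks directly that each is hereditary in $\inout 11$. An edgeless digraph is in $\incl 1\subseteq\inout 11$ (the whole vertex set is independent and each vertex reaches itself by the length-$0$ path), and its induced subgraphs are again edgeless. Every induced subgraph of $\gra{\mathbb N}{<}$ is isomorphic to some $\gra S{<}$ with $S\subseteq\mathbb N$, and if $S\neq\emptyset$ then $\{\min S\}$ witnesses $\gra S{<}\in\outcl 1\subseteq\inout 11$; the case $\gra{\mathbb N}{>}$ is dual, using greatest elements and $\incl 1$. Finally, an induced subgraph of $\gra{\mathbb Z}{<}$ is isomorphic to some $\gra S{<}$ with $S\subseteq\mathbb Z$; picking any $c\in S$ (when $S\ne\emptyset$) and writing $S^-=\{s\in S:s\le c\}$, $S^+=\{s\in S:s>c\}$, the set $S^-$ has a greatest element, so $\gra{S^-}{<}\in\incl 1$, while $S^+$, being a subset of $\mathbb Z$ bounded below, is empty or has a least element, so $\gra{S^+}{<}\in\outcl 1$; hence $(S^-,S^+)$ witnesses $\gra S{<}\in\inout 11$.

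I do not expect any real obstacle: all the substance is already contained in Theorem~\ref{tm:general}, and what remains is only the bookkeeping of matching the given partition $(A_1,\dots,A_k)$ to the form $(V_0,\dots,V_k)$ demanded there (handled by prepending an empty block, using that hereditary membership in $\inout 11$ implies both what (ii) and what (iii) need), together with the elementary remark that nonempty subsets of $\mathbb Z$ bounded below have least elements.
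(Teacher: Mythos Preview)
Your proposal is correct and follows exactly the paper's route: apply Theorem~\ref{tm:general} with $m=\ell=1$. The paper's own proof is the one-line remark that Theorem~\ref{tm:general} applies; your insertion of an empty block $V_0$ is a harmless bookkeeping device (one could equally take $V_0=A_1$, since hereditary in $\inout 11$ implies hereditary in $\inout 22$), and your verification of the parenthetical examples is more than the paper spells out but entirely accurate.
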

\begin{proof}
Since every $G[A_i]$ is hereditary in  $\inout 11$  we can apply
Theorem \ref{tm:general}.
\end{proof}

\section{When $G$ resembles to a tournament 
}\label{s:small}

\noindent
Let's recall that $\gra{\mathbb Z}<\not\in \outcl 2$ but clearly
$\gra{\mathbb Z}<\in \inout 11$. We  show  that a similar
theorem applies for an arbitrary tournament.

\begin{theorem}\label{tm:inft}If an infinite tournament  $G=(V,E)$
is not in $\outcl 2$ then $G\in \inout 11$.
\end{theorem}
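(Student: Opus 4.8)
The plan is to use the structure of a tournament without a quasi-kernel to build the desired $\inout 11$-partition. Fix a tournament $G = (V,E)$ with $G \notin \outcl 2$; in particular $V$ is infinite. The natural first move is to run the Chv\'atal--Lov\'asz greedy/transfinite argument as far as it will go: enumerate $V = \{v_\alpha : \alpha < \kappa\}$ and try to grow an independent set $A$ together with the set $\outc G2A$; since $G \notin \outcl 2$, this process must ``get stuck'', and the point is to read off from the obstruction a vertex or a set of vertices that dominates, via short in-paths, a cofinal/unbounded piece of the tournament. More concretely, I would first establish the auxiliary fact that in \emph{any} tournament, for each vertex $v$ either $\outc G2v$ is ``large'' or $\inc G2v$ is; the failure of $G\in\outcl 2$ forces, for every independent $A$, some vertex missed by $\outc G2A$, and iterating this should expose a kernel-like obstruction concentrated in the ``out-direction''.

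The heart of the argument, as I see it, is to produce a single vertex $z \in V$ (or a short chain of such vertices) with the property that $V = \inc G2 z$ fails but $V \setminus \inc G1 z$ is itself a tournament of the same kind, allowing an induction or a direct limit construction; alternatively, to directly partition $V = V_0 \sqcup V_1$ where on $V_0$ one exhibits an independent $A$ with $V_0 = \inc{G[V_0]}1A$ and on $V_1$ an independent $B$ with $V_1 = \outc{G[V_1]}1B$. Because $\incl 1$ means ``$A$ independent and every vertex has an out-edge into $A$'' and $\outcl 1$ means ``$B$ independent and every vertex has an in-edge from $B$'', and in a tournament independence is extremely restrictive (an independent set has size at most... well, it can be infinite only if it induces no edges, impossible in a tournament for two distinct vertices) — so in fact $A$ and $B$ are \emph{singletons or empty}. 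Thus the statement reduces to: \emph{if $G \notin \outcl 2$ then $V$ can be split into $V_0, V_1$ such that some single vertex of $V_0$ beats every other vertex of $V_0$, and some single vertex of $V_1$ is beaten by every other vertex of $V_1$} — i.e., $G[V_0]$ has a source and $G[V_1]$ has a sink.

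So the real task is: a tournament with no quasi-kernel can be split into a part with a global source and a part with a global sink. I would prove this by the following steps. First, observe that if $G$ has a vertex $s$ with $\outc G1 s = V$ (a source) then $\{s\}$ already witnesses $G \in \outcl 1 \subs \outcl 2$, contradiction; likewise $G$ has no sink, and more relevantly, $G \notin \outcl 2$ means: for every $v$, the set $\outc G2 v \ne V$, i.e. there is $w$ with $v \not\to w$ and no $u$ with $v \to u \to w$; equivalently every $v$ has a vertex ``two steps away in the wrong direction'' — this says $\inc G2 v = V$ need not hold either, but one direction of a dichotomy must. The key lemma I expect to need is a \emph{dichotomy}: for each $v\in V$, either $\outc G2 v = V$ or $\inc G2 v = V$ — but this is false in general, so instead I'd aim for: the vertices split into $S = \{v : \outc G\infty v = V\}$ and its complement, analyze reachability classes, and use that $G \notin \outcl 2$ to force enough structure (e.g. that the ``condensation'' is a well-order of type $\le \omega$ or similar) to locate the source-part and sink-part.

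\textbf{The main obstacle.} The delicate point is precisely this dichotomy/localization step: extracting, from the \emph{global} non-existence of a 2-step dominating independent set, a \emph{clean bipartition} witnessed by two single vertices. Tournaments can have very intricate reachability structure (dense pockets of strongly connected parts), so the argument must show that the obstruction to $\outcl 2$ is ``linear'' enough — essentially that the strongly connected components are linearly ordered and that failure of $\outcl 2$ pins down an initial or cofinal segment behaving like $\gra{\mathbb Z}<$. I expect the proof to handle this by a careful transfinite analysis of the out-reachability preorder, quite possibly splitting into cases according to whether $G$ has a strongly connected cofinal part, and the bookkeeping there is where the work concentrates.
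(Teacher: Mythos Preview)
Your reduction is correct: since independent sets in a tournament have at most one vertex, the task is precisely to split $V$ into a part with a sink and a part with a source. But everything after that reduction is far too elaborate, and the ``main obstacle'' you flag is not an obstacle at all. No transfinite enumeration, no study of strongly connected components, no case split on cofinal structure is needed; the proof is three lines.

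Here is the whole argument (and this is exactly what the paper does). Pick any $x\in V$. Since $G\notin\outcl 2$ there exists $y\notin\outc G2x$. Claim: $V=\inc G1x\cup\outc G1y$. Indeed, if $z\notin\outc G1y$ then, $G$ being a tournament, $(z,y)\in E$; but then $xzy$ cannot be a directed path because $y\notin\outc G2x$, so $(x,z)\notin E$, hence $(z,x)\in E$, i.e.\ $z\in\inc G1x$. Now set $V_1=\inc G1x\setminus\{y\}$ and $V_2=V\setminus V_1$: then $x$ is a sink of $G[V_1]$ and $y$ is a source of $G[V_2]$, witnessing $G\in\inout 11$.

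The idea you were circling around --- your tentative dichotomy ``$\outc G2v=V$ or $\inc G2v=V$'' --- is almost the right thought, just mis-aimed: what actually holds is that whenever $y\notin\outc G2x$, the \emph{pair} $(x,y)$ already furnishes the partition via $\inc{}1x$ and $\outc{}1y$. You do not need a single vertex that works on its own, nor any analysis of the reachability preorder; two vertices chosen this way always suffice.
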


\begin{proof}
Let $x\in V$ be arbitrary. If  $y\notin\outc {}2{x}$ then  $V=\inc
{}1x \cup \outc {}1y$.  Indeed,  if $z\notin \outc G1y$ then $(z,y)\in E$ but
$xzy$ is not a directed path of length two in $G$ by the choice of $y$, so
$(x,z)\notin E$. Thus $(z,x)\in E$, i.e $z\in \inc G1x $. Since $z$ was 
arbitrary, we obtain  $G\in \inout 11$.
\end{proof}

If $G=(V,E)$ is a digraph define the {\em undirected} complement of
the graph, $ \compl G=(V,\compl E)$ as follows: $\{x,y\}\in \compl
E$ if and only if $(x,y)\notin E$ and  $(y,x)\notin E$.
The graph $\compl G$ can be used to measures the difference between 
$G$ and a tournament: the more edges in $\compl G$ the large
difference between $G$ and a tournament; e.g.
$G$ is a tournament iff $\compl G$ does not have edges.

\begin{theorem}\label{tm:kn}
Let $G=(V,E)$ be a directed graph. If $K_n\not\subset \compl G$ for
some $n\ge 2$ then $G\in \inout 22$.  Moreover, if $n=2$ then $G\in
\outcl 2\cup \inout 11$, and if $n=3$, i.e. $\compl G$ is
triangle-free, then $G\in \inout 12$ or $G\in \inout 21$.
\end{theorem}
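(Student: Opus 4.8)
The natural approach is to leverage Theorem \ref{tm:general} (or Theorem \ref{tm:general2}) by partitioning $V$ into pieces that are "close to tournaments" and invoking the hereditary behavior already established. The key observation is that if $K_n \not\subset \compl G$, then $\compl G$ has no independent set... no, rather: $\compl G$ has clique number at most $n-1$. By a Ramsey-type / Dilworth-type argument this does not immediately bound the chromatic number of $\compl G$ for infinite graphs, so instead I would proceed structurally. For the main claim $G\in\inout 22$: pick a vertex $x_0$; the set $W_0$ of vertices $w$ with $\{x_0,w\}\in\compl E$ induces a subgraph $G[W_0]$ with $K_{n-1}\not\subset\compl{G[W_0]}$, so by induction on $n$ (base case $n=2$ below) $G[W_0]\in\inout 22$, hence $W_0$ splits as $X_0\cup Y_0$. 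The remaining vertices are comparable to $x_0$ in $G$; one wants to absorb them into the quasi-kernel or quasi-sink part using short paths through $x_0$. I would set up a partition $(V_0,\dots,V_k)$ where the $V_i$ for $i<k$ are such $K_{n-1}$-free-complement pieces (hereditary in $\inout 11$ or $\inout 22$ by the inductive hypothesis applied hereditarily), and the final piece $V_k$ is handled directly, then quote Theorem \ref{tm:general}; but making the partition finite and verifying the hereditary hypotheses is the delicate point.

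For the base case $n=2$, i.e. $\compl G$ has no edge, $G$ is a tournament, so Theorem \ref{tm:inft} gives $G\in\outcl 2\cup\inout 11$ directly; this also establishes the second sentence of the theorem. For $n=3$, with $\compl G$ triangle-free, I would argue as follows. Fix $x_0\in V$ and let $W$ be the set of $\compl E$-neighbors of $x_0$. Since $\compl G$ is triangle-free, $W$ is independent in $\compl G$, i.e. $G[W]$ is a tournament, so by Theorem \ref{tm:inft} either $G[W]\in\outcl 2$ or $G[W]\in\inout 11$. Meanwhile $V\setm(W\cup\{x_0\})$ consists of vertices comparable to $x_0$; split these into $P=\outc{}1{x_0}\setm W$ (successors of $x_0$) and $Q=\inc{}1{x_0}\setm(W\cup P)$ (predecessors only). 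The plan is: put $x_0$ together with the predecessors $Q$ into the "$\outcl{}$-side" — every vertex of $Q\cup\{x_0\}$ is reached from $x_0$ in one step — while the successors $P$ reach $x_0$, hence lie in $\inc{}1{x_0}$; combine this with the $\inout 11$ or $\outcl 2$ decomposition of $G[W]$ to land in $\inout 12$ or $\inout 21$ depending on which case of Theorem \ref{tm:inft} occurred for $G[W]$. Concretely, if $G[W]\in\inout 11$ with partition $(W',W'')$, take $V_1=W''\cup P$ (to be the $\incl{}$-part: $B_1=B\cup\{x_0\}$ works if one checks independence and the path bound $m+1=2$) and $V_2=W'\cup Q\cup\{x_0\}$... — the exact assignment has to be tuned so the witnessing independent sets stay independent, which forces some of the one-step neighborhoods to be trimmed as in the proofs of Theorems \ref{tm:general2} and \ref{tm:general}.

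The main obstacle I anticipate is \emph{keeping the partition finite and the witnessing sets independent simultaneously}: when I throw $x_0$ and its one-step neighborhoods onto one side, the independent set witnessing that side may collide with a neighbor, so I must delete $\outc{}1{(\text{old witness})}$-vertices and re-absorb them — exactly the bookkeeping in Theorem \ref{tm:general}'s proof — but here the recursion in $n$ compounds this, and I must ensure the process terminates after finitely many steps (which it does, since $n$ decreases). A secondary subtlety is that for the $\inout 12$ vs. $\inout 21$ dichotomy one cannot choose \emph{both} quasi-kernel and quasi-sink freely; the side on which $G[W]$ contributes its $\inout 11$ (or $\outcl 2$) piece dictates whether the bound-$2$ appears on the in-side or the out-side, so the statement is genuinely a disjunction and the proof must split into the two cases coming from Theorem \ref{tm:inft}. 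I would present the $n=3$ case in full detail and then remark that the general $K_n$ case follows by an analogous but cruder induction feeding into Theorem \ref{tm:general}.
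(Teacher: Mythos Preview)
Your overall architecture---induction on $n$, base case $n=2$ via Theorem~\ref{tm:inft}, and feeding the inductive step into Theorem~\ref{tm:general}---is exactly the paper's plan. The gap is in how you produce the two pieces of the partition.

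You pivot on a \emph{single} vertex $x_0$ and let $W$ be its $\compl E$-neighbourhood. It is true that $K_{n-1}\not\subset\compl{G[W]}$, so induction applies there. But the remaining piece $V\setminus W=\{x_0\}\cup P\cup Q$ (with $P$ the out-neighbours and $Q$ the in-neighbours of $x_0$) is \emph{not} in $\inout 11$ in general: $x_0$ can sit on only one side of the partition, and whichever of $P$, $Q$ is left without $x_0$ has no candidate witness at all. (Incidentally, your directions are reversed: $Q$ reaches $x_0$, not the other way, so $\{x_0\}$ witnesses $\incl 1$ on $Q\cup\{x_0\}$, not $\outcl 1$.) This is not the ``independence collision'' you flag as the main obstacle---it is a missing witness, and no amount of trimming in the style of Theorem~\ref{tm:general} creates one.

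The paper repairs this by pivoting on \emph{two} independent sets rather than one vertex. Take $A$ maximal independent in $G$; if $V=\outc{}2A$ we are done, otherwise take $C$ maximal independent in $G[V\setminus\outc{}2A]$. Set $L=\inc{}1A\setminus C$, $M=\outc{}1C\setminus L$, $N=V\setminus(L\cup M)$. Now $A\subset L$ and $C\subset M$ give $G[L]\in\incl 1$ and $G[M]\in\outcl 1$, so $G[L\cup M]\in\inout 11$ with \emph{both} sides witnessed. The key structural claim is that every $x\in N$ is $\compl G$-adjacent to every $c\in C$: one checks $x\notin\inc{}1A$, hence some $(a,x)\in E$ by maximality of $A$, so an edge $(x,c)$ would put $c\in\outc{}2A$; and $(c,x)\notin E$ since $x\notin M$. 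Thus $K_{n-1}\not\subset\compl{G[N]}$, and one applies Theorem~\ref{tm:general} to the partition $(N,\,L\cup M)$ for $n>3$. For $n=3$ the set $N$ is a tournament and one splits by hand according to the two outcomes of Theorem~\ref{tm:inft} applied to $G[N]$, combining its witness with $A$ or $C$ to land in $\inout 12$ or $\inout 21$ respectively. The idea you are missing is precisely this two-sided pivot on $A$ and $C$.
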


\begin{proof}
By induction on $n$. If $n=2$ then $\compl G$ does not contain
edges, i.e. $G$ is a tournament and so we are done by Theorem
\ref{tm:inft}.

Assume now that the theorem  is true for $n-1$ and prove it for $n$.
Let $A$ be a maximal independent set in $G$. If $V=\outc{}2{A}$ then
we are done.

If this is not the case, then let $C$ be a maximal independent set
in $G[V\setm \outc{}2{A}]$. Let $L=\inc{}1{A}\setm C$,
$M=\outc{}1{C}\setm L$ and $N=V\setm (L\cup M)$.

\psfrag*{c}{$c$}
\psfrag*{C}{$C$}
\psfrag*{a}{$a$}
\psfrag*{A}{$A$}

\psfrag*{V}{}
\psfrag*{L}{$L$}
\psfrag*{M}{$M$}
\psfrag*{N}{$N$}
\psfrag*{x}{$x$}
\psfrag*{y}{$y$}
\begin{center}
\includegraphics[keepaspectratio,width=5cm]{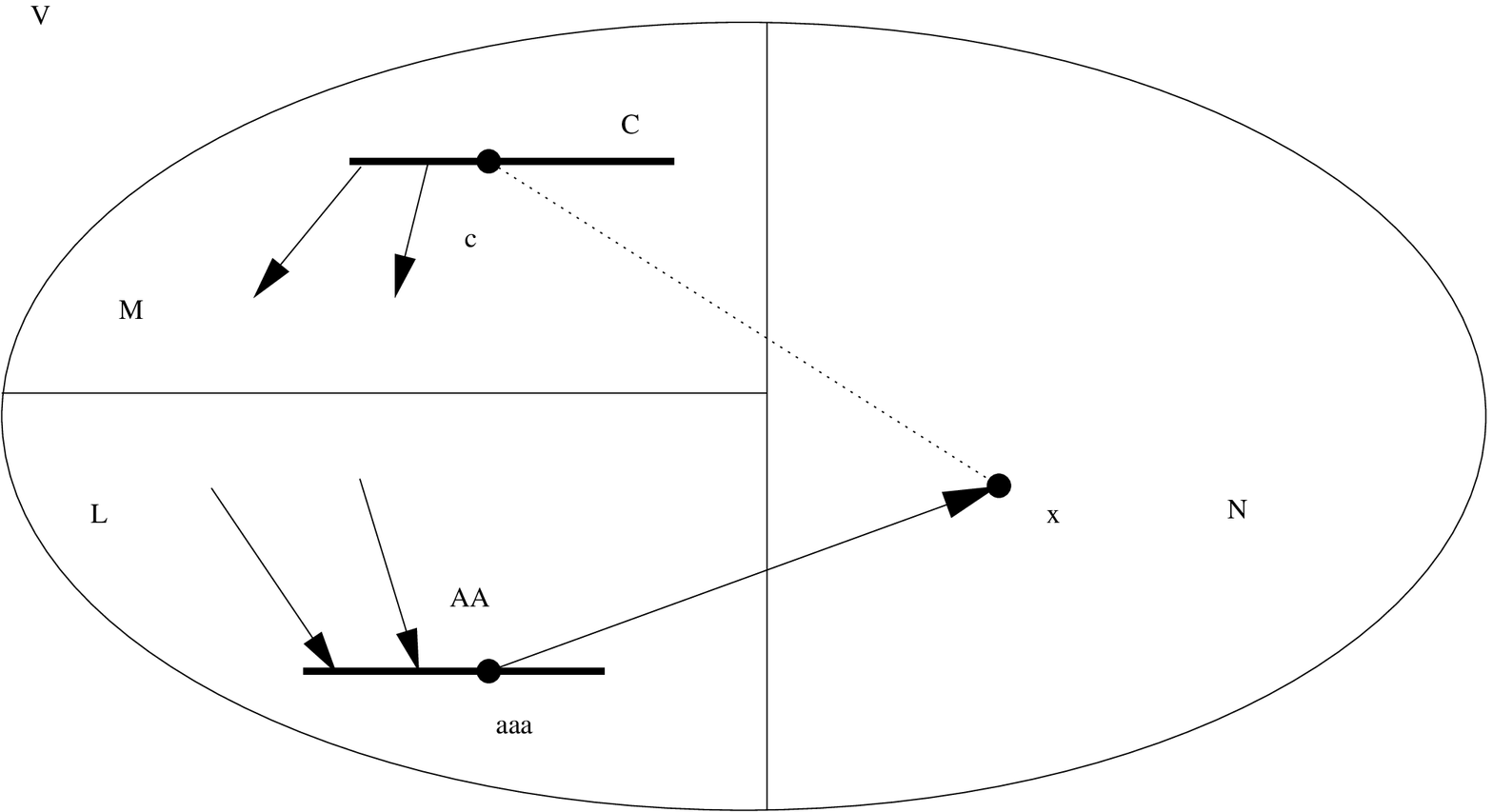}
\end{center}

\begin{claim}
There is no edge in $G$ between $N$ and $C$.
\end{claim}

\begin{proof}[Proof of the claim]
Let $x\in N$. If $a\in A$ then $(x,a)\notin E$ because
$x\notin \inc{}1{A}$ but $(a,x)\in E$ for some $a\in A$
because $A$ was maximal.
Moreover for each $c\in C$ we have $(c,x)\notin E$ because
$x\notin \outc{}1{C}$. But $(x,c)\notin E$ as well otherwise
the path $(a,x,c)$ witnesses that $c\in \outc{}2{A}$.
\end{proof}
\noindent Since $C\ne \empt$ we have that $K_{n-1}\not\subset
\compl{G[N]}$ (otherwise $\compl{G}$ would contain $K_n$). Hence we
can apply the inductive hypothesis for $G[N]$.
\begin{case}
Let $n=3$.
\end{case}
Then $G[N]$ is a tournament. If $N=\outc{G[N]}2d$ for some $d\in N$
then $L=\inc{G[L]}1{A}$ and $V\setm L=\outc{G[V\setm
L]}2{C\cup\{d\}}$. Thus $G\in \inout 12$.

Otherwise $N$ has a partition $P\cup R$ such that $P=\outc{G[P]}1x$
and $R= \inc{G[R]}1y$. Then
$$M\cup P=\outc{G[M\cup P]}1{C\cup \{x\})}$$  and
$$L\cup R=\inc{G[L\cup R]}2{\{y\}\cup\{a\in A:(a,y)\notin E\}}.$$
Thus $G\in \inout21$.

\begin{case}
Let $n>3$.
\end{case}
By the inductive hypothesis $G[N]$ is hereditary in $\inout 22$
(since $K_n \not\subset \compl{G}$ is a hereditary property),
moreover $G[L\cup M]\in \inout 11$, hence we can apply Theorem
\ref{tm:general} for $m=\ell=1$, for the digraph $G$ and for the
partition $(N,L\cup K)$ to yield $G\in \inout 22$.
\end{proof}

\begin{corollary}
Let $G=(V,E)$ be a directed graph.
If the chromatic number of  $\compl G$ is finite
then $G$  is $\inout 22$.
\end{corollary}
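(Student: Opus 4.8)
The plan is to reduce the statement to Theorem~\ref{tm:kn} via the elementary fact that a graph of finite chromatic number contains no arbitrarily large complete subgraph. Write $k=\chi(\compl G)$. If $k=0$ then $V=\empt$ and there is nothing to prove, so assume $k\ge 1$ and put $n=k+1\ge 2$.

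First I would recall that the chromatic number is monotone under passing to subgraphs and that $\chi(K_{k+1})=k+1>k$; consequently $K_{k+1}$ cannot occur as a subgraph of $\compl G$, i.e. $K_n\not\subset\compl G$. Then Theorem~\ref{tm:kn}, applied with this value of $n$ (which indeed satisfies $n\ge 2$ as required there), immediately gives $G\in\inout 22$.

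I do not expect any genuine obstacle; the only points to handle carefully are the degenerate bookkeeping (the case $k=0$, and checking $n\ge 2$ so that the hypothesis of Theorem~\ref{tm:kn} applies). For completeness I would note that one could instead try to argue directly from a proper $k$-colouring $V=V_1\cup\dots\cup V_k$ of $\compl G$: each $G[V_i]$ is then a tournament, hence by Theorem~\ref{tm:inft} lies in $\outcl 2\cup\inout 11$, and one would like to feed the partition $(V_1,\dots,V_k)$ into Theorem~\ref{tm:general} with $m=\ell=1$. That route is less clean, however, since a tournament in $\outcl 2$ need not be \emph{hereditarily} in $\inout 11$, so the hypotheses of Theorem~\ref{tm:general} are not met without extra work; the clique-number argument sidesteps this issue entirely, and is the one I would write down.
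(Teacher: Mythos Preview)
Your proposal is correct and follows essentially the same approach as the paper: the paper's proof is the one-line observation that if $\chi(\compl G)=n$ then $K_{n+1}\not\subset\compl G$, whence Theorem~\ref{tm:kn} applies. Your side remark about the alternative route via Theorem~\ref{tm:general} is also anticipated in the paper's own Remark following the corollary, where it is noted that this only yields $G\in\inout 23$ (since tournaments are hereditary in $\inout 12$ rather than $\inout 11$).
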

\noindent Indeed, if the chromatic number of $\compl G$ is $n$ then
$\compl G$ can not contain $K_{n+1}$.

\rem One can try to prove directly this corollary from Theorem
\ref{tm:general}. If the chromatic number of $\compl G$ is finite
then the vertex set has a partition $(V_),\dots, V_k)$ such that
every $G[V_i]$ is a tournament and so $G[V_i]$ is hereditary in
$\inout 12$. Thus applying directly Theorem \ref{tm:general} we can
get only $G\in\inout 23$.

\begin{theorem}\label{tm:clocally}
If $G=(V,E)$ is a digraph such that $\compl G$ is locally finite then
$G\in \inout 22$.
\end{theorem}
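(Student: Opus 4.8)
The plan is to reduce the statement to Corollary~\ref{cor:part} by finding a suitable partition of $V$ into pieces each of which is hereditary in $\inout 11$. Recall that $\compl G$ being locally finite means every vertex of $\compl G$ has finite degree, i.e.\ for each $v\in V$ there are only finitely many $w$ with $\{v,w\}\in\compl E$. A locally finite graph has countable connected components; within a component, the vertices can be exhausted in $\omega$ steps. The hope is to two-colour $\compl G$ so that each colour class, with the inherited orientation from $G$, looks like a disjoint union of ``$\mathbb{Z}$-like'' tournaments, and then invoke Corollary~\ref{cor:part}.

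First I would work one connected component $C$ of $\compl G$ at a time; since an independent set and a quasi-kernel/quasi-sink can be assembled componentwise (the components are pairwise non-adjacent in $\compl G$, but in $G$ they are fully joined, so this needs care — more on this below), the real content is handling a single component. Inside a component $C$, $\compl G\restriction C$ is a connected locally finite graph, hence countably infinite (or finite, in which case $G\restriction C$ has finite-chromatic complement and Theorem~\ref{tm:kn}'s corollary applies). Enumerate $C=\{v_n:n<\omega\}$. I would try to build, greedily along this enumeration, a partition $C=C_0\cup C_1$ such that $G[C_0]$ and $G[C_1]$ are each hereditary in $\inout 11$. The natural way to force ``hereditary in $\inout11$'' is to make each $G[C_i]$ a tournament that is, on each finite piece, extendable to something of the form $\gra{\mathbb Z}{<}$; but local finiteness of $\compl G$ does not obviously give that. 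So instead I would aim for the weaker sufficient condition: arrange that $\compl{G[C_i]}$ has \emph{finite chromatic number} — in fact is triangle-free or better — and then use the structural results of Theorem~\ref{tm:kn} together with Theorem~\ref{tm:general}.

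The cleanest route, I expect, is this. For each $v$, let $d(v)$ be its (finite) $\compl G$-degree. Fix a connected component $C$ and a spanning tree $T$ of $\compl G\restriction C$; root it and $2$-colour $V(T)$ by depth parity. This is a proper $2$-colouring of $T$ but \emph{not} of $\compl G\restriction C$ in general, so each colour class $C_i$ may still carry some $\compl G$-edges — however, by König-type/locally-finite arguments one can instead take a proper colouring of $\compl G\restriction C$ with $\chi(\compl G\restriction C)$ colours, which is finite only if the component has bounded $\compl G$-chromatic number; it need not. So the genuine plan must be: partition $V$ so that each class induces in $\compl G$ a graph of \emph{bounded finite clique number}, then apply Theorem~\ref{tm:kn} ($K_n$-free $\Rightarrow\inout22$) plus Theorem~\ref{tm:general} to glue. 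Concretely: let $V_0=\{v:d(v)=0\}$ (an independent set in $\compl G$, so $G[V_0]$ is a tournament, hereditary in $\inout12\subseteq$ usable), and recursively peel off $V_{j+1}=\{v:d_{\compl G\restriction (V\setm(V_0\cup\dots\cup V_j))}(v)\text{ small}\}$. Since $\compl G$ is locally finite, this \emph{degeneracy peeling} terminates on each finite set but may need $\omega$ many classes. To keep the number of classes finite I would instead bound things by the maximum $\compl G$-degree \emph{only if it is finite}; in the general locally-finite case the right statement is that $\compl G$ has a proper colouring with $\Delta(\compl G)+1$ colours only when $\Delta$ is finite, so one falls back on: every locally finite graph is a countable union of induced forests? — no.

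The honest assessment: the main obstacle is that ``$\compl G$ locally finite'' does \emph{not} give a finite partition of $V$ into $\compl G$-independent (or even $K_n$-free, uniform $n$) pieces, so Corollary~\ref{cor:part} and Theorem~\ref{tm:general} are not directly applicable with finitely many blocks. I expect the actual proof to exploit local finiteness more cleverly — e.g.\ by noting that a locally finite graph is \emph{countable on each component} and building the $\mathfrak{In}_2$/$\mathfrak{Out}_2$ witnesses by a transfinite/recursive construction that processes the (possibly infinitely many) finite balls $B_{\compl G}(v,k)$ in order, using the finite Chv\'atal–Lov\'asz theorem on each ball and stitching the partial quasi-kernels together because new balls attach to old ones along a \emph{finite} boundary. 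So my concrete plan is: (1) reduce to one countable component $C$ of $\compl G$; (2) enumerate $C$ and let $F_0\subseteq F_1\subseteq\cdots$ be finite sets with $F_n\supseteq B_{\compl G}(v_n,2)\cap(\text{stuff seen so far})$ and $\bigcup F_n=C$, chosen so that each $F_{n+1}\setm F_n$ is $\compl G$-adjacent to $F_n$ only through finitely many vertices; (3) apply Theorem~\ref{tm:kn}/Chv\'atal–Lov\'asz to each finite $G[F_{n+1}\setm F_n]$ to get a finite $\inout22$-partition with witnesses, then correct it on the finite boundary (as in the proofs of Theorems~\ref{tm:general2} and \ref{tm:general}, where one deletes $\outc G1{A_k}$ and $\inc G1{B_k}$ — here these deletions stay finite because the boundary is finite); (4) take unions; the limit partition $(V_0,V_1)$ witnesses $G\in\inout22$ because every vertex lies in some $F_n$ and its $\mathfrak{In}_2$- or $\mathfrak{Out}_2$-witness path, of length $\le2$, was fixed at a finite stage and never disturbed afterward. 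The step I expect to be hardest is (3)–(4): ensuring the local corrections at successive finite boundaries are \emph{consistent} so that no vertex gets re-classified infinitely often and every short witness path, once created, survives — this is exactly the bookkeeping that the author's actual proof will need to make precise, and it is where a naive greedy argument can fail.
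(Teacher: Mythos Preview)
Your proposal has a genuine gap; in fact it contains two. First, the reduction ``work one connected component of $\compl G$ at a time'' is not sound: distinct $\compl G$-components are fully joined in $G$ (for every $u,v$ in different components, one of $(u,v),(v,u)$ is in $E$), so quasi-kernels and quasi-sinks built on separate components interact in $G$ and cannot simply be taken as a union. You flag this yourself but never return to it. Second, and more seriously, your final recursive sketch (exhaust a countable component by finite chunks $F_n$, apply Chv\'atal--Lov\'asz locally, correct on the finite boundary) does not come with any mechanism guaranteeing that a vertex's classification and its length-$\le 2$ witness path stabilise; the ``corrections'' of Theorems~\ref{tm:general2} and~\ref{tm:general} work because they are one-directional (one deletes $\outc G1{A_k}\cup\inc G1{B_k}$ \emph{before} building the next witness), whereas you are trying to run them in both directions infinitely often. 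You correctly identify this as the hard step and then leave it open --- which means the proof is not there.

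The paper's argument bypasses all of this with a different idea: transfinite induction on $\lambda=|V|$, together with a dichotomy. Either some pair $x,y$ has $|\outc G1x\cap \inc G1y|=\lambda$, in which case one builds the partition $(X,Y)$ directly by a greedy transfinite recursion, routing each new vertex $v$ through this large set $U$ to either $x$ (putting $v$ into $X$) or $y$ (putting $v$ into $Y$) --- local finiteness of $\compl G$ is exactly what guarantees that one of $\inc G1v\cap U$, $\outc G1v\cap U$ has size $\lambda$, so a fresh routing point is always available. Or else every such intersection has size $<\lambda$; then for any $x,y$ the set $W=V\setminus(\outc G1x\cup\inc G1y)$ has size $<\lambda$ (local finiteness makes $W\setminus(\inc G1x\cap\outc G1y)$ finite, and $\inc G1x\cap\outc G1y$ is small by assumption), so the inductive hypothesis handles $G[W]$, while $G[V\setminus W]\in\inout11$ trivially, and a single application of Theorem~\ref{tm:general} with $k=1$ finishes. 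The key insight you are missing is to anchor the whole construction on \emph{two fixed vertices} $x,y$ rather than on a growing system of finite patches.
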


\begin{proof}
We prove by transfinite induction on ${\lambda}=|V|$. If ${\lambda}$
is finite then $G\in \outcl 2$ by Theorem \ref{tm:cl}. We can assume
that ${\lambda}=|V|$ is infinite and we have proved the theorem for
graphs of cardinality $<{\lambda}$. We distinguish two cases.

\smallskip\noindent{\bf Case 1:}{\em  There is $\{x,y\}\in \br V;2;$
such that the set  $U=\outc G1x\cap \inc G1y$ has cardinality
${\lambda}$.}

We will find a partition $(X,Y)$ of $V$ such that $X=\outc{G[X]}2x$
and $Y=\inc{G[Y]}2y$. For that end fix an enumeration
$\<v_{\zeta}:{\zeta}<{\lambda}\>$ of $V$. By transfinite induction
on ${\zeta}<{\lambda}$ we will construct disjoint subsets
$X_{\zeta}$ and $Y_{\zeta}$ of $V$, $|X_{\zeta}|+|Y_{\zeta}|
<{\omega}+|{\zeta}|$, such that $X_{\zeta}=\outc{G[X_{\zeta}]}2x$ and
$Y_{\zeta} =\inc{G[Y_{\zeta}]}2y$.

Put $X_0=\{x\}$ and $Y_0=\{y\}$. Assume that for all $\eta < \zeta$
we have already constructed $X_\eta, Y_\eta.$ If ${\zeta}$ is a
limit ordinal put $X_{\zeta}=\bigcup\{X_{\xi}: {\xi}<{\zeta}\}$ and
$Y_{\zeta}=\bigcup\{Y_{\xi}:{\xi}<{\zeta}\}$

If $\zeta$ is not a limit ordinal, then $\zeta=\eta +1$ and we have
$X_{\eta}$ and $Y_{\eta}$ such a way that $X_\eta=\outc
{G[X_\eta]}2x$ and $Y_\eta=\inc{G[Y_\eta]}2y$. Let
$i=\min\{i':v_{i'}\notin X_{\eta}\cup Y_{\eta}\}$.

If $|\inc G1{v_i}\cap U|={\lambda}$ then let
$$j=\min\{j':v_{j'}\in (\inc G1{v_i}\cap\outc G1x\setm
(X_{\eta}\cup Y_{\eta})\},
$$
and let $X_{\zeta}=X_{\eta}\cup\{v_i,v_j\}$ and
$Y_{\zeta}=Y_{\eta}$.

If $|\inc G1{v_i}\cap U|<{\lambda}$ then $|\outc G1{v_i}\cap
U|={\lambda}$ because
$v_i$ has finite degree in $\compl G$.
Let
$$j=\min\{j':v_{j'}\in \left(\outc G1{v_i}\cap\inc G1y\right)\setm (X_{\eta}
\cup Y_{\eta})\},$$ and let $Y_{\zeta}=Y_{\eta} \cup\{v_i,v_j\}$ and
$X_{\zeta}=X_{\eta}$. Put finally $X=X_{\lambda}$ and
$Y=Y_{\lambda}$.

\smallskip\noindent{\bf Case 2:}  {\em $|\outc G1x\cap \inc
G1y|<{\lambda}$ for each $\{x,y\}\in \br V;2;$}.

Let $\{x,y\}\in \br V;2;$ be arbitrary vertices. Put $W=V\setm
(\outc G1x\cup \inc G1y)$. Then $W\setm (\inc G1x\cap \outc G1y)= (W
\setminus \inc G1x) \cup (W\setminus \outc G1y)$ is finite because
$\compl G$ is locally finite. Thus $|W|<{\lambda},$ hence $G[W]$ is
hereditary in $\inout 22$ by the inductive hypothesis. Moreover
$V\setm  W=\outc G1x\cup \inc G1y$, hence $G[V\setm W]\in \inout
11$. Therefore  we can apply Theorem \ref{tm:general} for
$m=\ell=1$, the digraph $G$ and the partition $(W,V\setm W)$ to
yield $G\in \inout 22$.
\end{proof}

\section{Infinite tournaments}\label{s:tour}

\noindent
In this section we prove structure theorems for infinite tournaments.

  For any cardinal ${\kappa}$ let 
the digraph $\testinfk=\gra {\kappa}\ge$, i.e.
$(x,y)$ is an edge if and only if $x\ge y$. (This is a tournament
with all possible loops.)


\begin{theorem}\label{tm:non-inft}
For an infinite tournament $G=(V,E)$ the followings are equivalent:
\begin{enumerate}[{\rm (i)}]
\item $G\notin\outcl \infty$,
\item  for some  regular cardinal ${\kappa}$ 
there is a surjective homomorphism ${\varphi}:G\to \testinfk$.
 \end{enumerate}
\end{theorem}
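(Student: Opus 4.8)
The plan is to recast membership in $\outcl\infty$ for a tournament purely in terms of reachability. Define a relation on $V$ by $x\preceq y$ iff there is a directed path in $G$ from $x$ to $y$ (a path of length $0$ is allowed, so $\preceq$ is reflexive); it is clearly transitive, and because $G$ is a tournament it is \emph{total}: for $x\neq y$ the unique edge between $x$ and $y$ is already such a path, so $x\preceq y$ or $y\preceq x$. Writing $x\sim y$ for ``$x\preceq y\preceq x$'' (equivalently, $x,y$ lie in a common strongly connected component) and $L=(V/{\sim},\le)$ for the induced linear order of components, the preorder $\preceq$ is just the pullback of $\le$ along $v\mapsto[v]$. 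Since an independent set in a tournament has at most one vertex and $\outc G\infty\emptyset=\emptyset\neq V$, we have $G\in\outcl\infty$ iff $V=\outc G\infty a$ for some $a\in V$; and $\outc G\infty a=\{v:a\preceq v\}$, so this happens exactly when $[a]$ is the least element of $L$. Hence (i), i.e.\ $G\notin\outcl\infty$, is equivalent to: \emph{$L$ has no least element} (in particular $L$, and so $V$, is infinite).

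For (ii)$\Rightarrow$(i): let $\varphi\colon G\to\testinfk$ be a surjective homomorphism with $\kappa$ infinite, and suppose toward a contradiction that $V=\outc G\infty a$. For each $v$ fix a directed path $a=u_0,u_1,\dots,u_n=v$; applying the homomorphism property along it gives $\varphi(a)=\varphi(u_0)\ge\varphi(u_1)\ge\dots\ge\varphi(u_n)=\varphi(v)$, since $(x,y)\in E$ forces $\varphi(x)\ge\varphi(y)$ in $\testinfk$. Thus $\ran\varphi\subseteq\varphi(a)+1$, a proper initial segment of $\kappa$ because $\varphi(a)<\kappa$ and $\kappa$ is infinite, contradicting surjectivity. (Equivalently: $\varphi$ is constant on each $\sim$-class and descends to a weakly order-reversing surjection $L\to\kappa$, which is impossible if $L$ has a least element.)

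For the main direction (i)$\Rightarrow$(ii): by the first paragraph $L$ has no least element, so the \emph{coinitiality} $\kappa:=\cf(L^{*})$ of $L$ (the least cardinality of a coinitial subset of $L$) is well defined, infinite, and — being the cofinality of the linear order $L^{*}$ — a regular cardinal. First I would extract a \emph{strictly decreasing} coinitial sequence $\langle c_\alpha:\alpha<\kappa\rangle$ in $L$: starting from a coinitial subset $\{d_\xi:\xi<\kappa\}$ of $L$ and recursing, at stage $\alpha$ the set $\{c_\beta:\beta<\alpha\}$ has size $<\kappa$ and so is not coinitial, hence some $e_\alpha\in L$ lies strictly below all of it; putting $c_\alpha=\min\{e_\alpha,d_\alpha\}$ keeps the sequence strictly decreasing while ensuring $\{c_\alpha:\alpha<\kappa\}$ stays coinitial. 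Now define $\psi\colon L\to\kappa$ by $\psi(\ell)=\min\{\alpha<\kappa:c_\alpha\le\ell\}$ (well defined by coinitiality) and $\varphi\colon V\to\kappa$ by $\varphi(v)=\psi([v])$. Then $\psi$ is weakly order-reversing: if $\ell\le\ell'$ then $\{\alpha:c_\alpha\le\ell\}\subseteq\{\alpha:c_\alpha\le\ell'\}$, so $\psi(\ell')\le\psi(\ell)$. Since an edge $(x,y)\in E$ gives $x\preceq y$, i.e.\ $[x]\le[y]$ in $L$, it follows that $\varphi(x)=\psi([x])\ge\psi([y])=\varphi(y)$, so $(\varphi(x),\varphi(y))$ is an edge of $\testinfk$ and $\varphi$ is a homomorphism. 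Finally $\psi(c_\alpha)=\alpha$ for every $\alpha<\kappa$, because $c_\beta\le c_\alpha\iff\beta\ge\alpha$ (strict decrease), so $\varphi$ is onto $\kappa$.

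I do not expect a serious obstacle; the delicate point is the bookkeeping of \emph{directions} — edges of $\testinfk$ run from larger to smaller, reachability in $G$ runs ``upward'' in $L$, and $\varphi$ must reverse $L$ — so it is easy to slip a sign. The only other technical ingredient, that the coinitiality of $L$ is a regular cardinal carried by a strictly decreasing coinitial sequence, is the standard fact about cofinalities of linear orders applied to $L^{*}$.
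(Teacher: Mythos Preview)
Your proof is correct and follows essentially the same route as the paper: pass to the linear order on strong components, observe that $G\notin\outcl\infty$ is exactly the absence of an extremal element, extract a strictly monotone cofinal/coinitial $\kappa$-sequence for a regular $\kappa$, and define $\varphi$ by the least index dominating a given class. The only difference is cosmetic: the paper orders the classes in the opposite direction (so it speaks of a last element and an increasing cofinal sequence rather than a least element and a decreasing coinitial one), and it is terser about why the monotone sequence exists, whereas you spell out the $c_\alpha=\min\{e_\alpha,d_\alpha\}$ bookkeeping.
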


\begin{proof}
(ii) clearly implies (i): if ${\varphi}(x)=k$ then ${\varphi}(y)\le
k$ for each $y\in \outc G{\infty}x$, and so $\outc G{\infty}x\ne V$
because ${\varphi}$ is surjective.

Assume now that (i) holds, i.e. $G\notin \outcl \infty$. Then define
the equivalence relation $\equiv$ on $V$  as follows:
\begin{equation}
x\equiv y\quad \Longleftrightarrow \quad \Big ( x\in \outc G\infty y
\ \hbox{ and }\  y\in \outc G\infty x \Big ).
\end{equation}
Let us denote by $\ecal$ the equivalence classes $[x]_\equiv$ of the
equivalence relation $\equiv$. Now define the relation $\preceq$ on
$\ecal$ as follows:
\begin{equation}
X\preceq Y \Longleftrightarrow \Big ( X=Y\ \hbox{ or }\ (y,x)\in E \
\hbox{ for each pair}\ x\in X,   y\in Y \Big ).
\end{equation}
Clearly $X\preceq Y$ if $(y,x)\in E$ for some $x\in X$ and $y\in Y$.
The relation $\preceq$ is an ordering on $\ecal$. If $\preceq$ has a
last element $X$ then $V=\outc G\infty x$ for each $x\in X$. Hence
$\<\ecal,\preceq\>$ contains a strictly increasing cofinal sequence
$\<X_{\xi}:{\xi}<{\kappa}\>$ for some regular cardinal ${\kappa}$.

Define ${\varphi}:V\to {\mathbb{N}}$ by the formula
${\varphi}(v)=\min \{{\xi}: [v]_\equiv\preceq X_{\xi}\}$. The map
${\varphi}$ is clearly  homomorphism onto $\testinfk$.
\end{proof}

\noindent Define the digraph  $\testt=\<\mathbb N,E\>$ as follows
\begin{equation}
E=\{(x,y):x\ge y\}\cup\{
(x,x+1):x\in \mathbb N\}.
\end{equation}
$\testt$ can be obtained from $\testinf$ by adding edges
$\{(n,n+1):n\in \mathbb N\}$.

\bigskip
\psfrag*{x0}{0}
\psfrag*{x1}{1}
\psfrag*{x2}{2}
\psfrag*{x3}{3}
\psfrag*{testinf}{$\testinf$}
\psfrag*{testt}{$\testt$}
\begin{center}
\includegraphics[keepaspectratio,width=10cm]{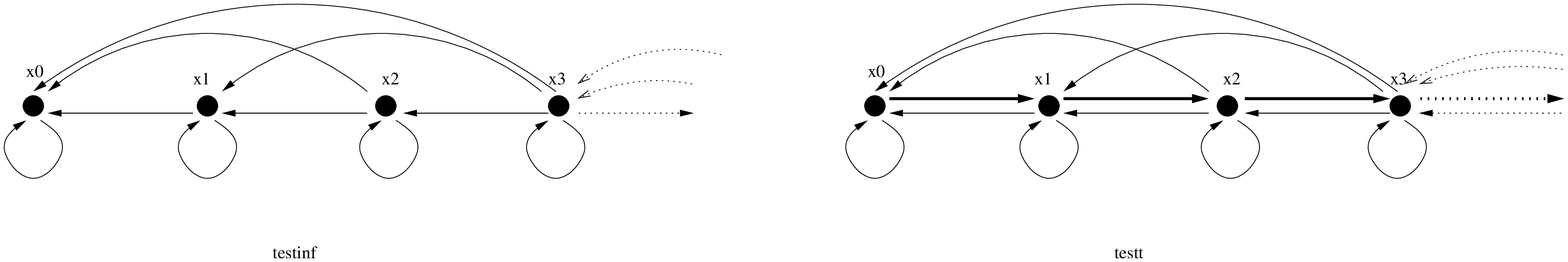}
\end{center}

\begin{theorem}\label{tm:nott} For an
 infinite tournament $G\in \outcl \infty$ the followings are equivalent:
 \begin{enumerate}[{\rm (i)}]
\item
$G\notin\outcl 3$,
\item $G\notin\outcl n$ for each $n\ge 3$,
\item there is a surjective homomorphism ${\varphi}:G\to \testt$.
 \end{enumerate}
\end{theorem}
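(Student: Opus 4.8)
The plan is to prove the cycle of implications $(ii)\Rightarrow(i)\Rightarrow(iii)\Rightarrow(ii)$, noting that $(ii)\Rightarrow(i)$ is immediate since $3\ge 3$. The real content is $(i)\Rightarrow(iii)$ and $(iii)\Rightarrow(ii)$.

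For $(iii)\Rightarrow(ii)$, suppose ${\varphi}:G\to\testt$ is a surjective homomorphism and fix $n\ge 3$; I want to show $G\notin\outcl n$. Suppose for contradiction that an independent set $B$ witnesses $G\in\outcl n$. Since ${\varphi}$ is a homomorphism, any directed path of length at most $n$ in $G$ maps to a directed walk of length at most $n$ in $\testt$. So it suffices to observe that in $\testt$ itself no single point $k$ (hence no ${\varphi}$-image of an independent set, but in a tournament image an independent set maps to a single vertex modulo the equivalence — here I must be careful) can reach all of $\mathbb N$ by walks of length $\le n$: from $k$ the edges $(x,y)$ with $x\ge y$ only go downward, and the extra edges $(x,x+1)$ advance by exactly $1$, so in $n$ steps one reaches at most $k+n$. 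More precisely, since $B$ is independent in the tournament $G$, $B$ maps into a single $\equiv$-class, and because $G\in\outcl\infty$ that class is not the top; using the $\testt$-structure, from ${\varphi}[B]$ one can only reach $\{0,1,\dots,m+n\}$ for some fixed $m$, contradicting surjectivity of ${\varphi}$ onto an infinite set. This step should be routine once the bookkeeping about independent sets and the $\equiv$-relation is set up, reusing ideas from the proof of Theorem~\ref{tm:non-inft}.

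The hard part is $(i)\Rightarrow(iii)$: constructing the homomorphism onto $\testt$ from the mere failure $G\notin\outcl 3$ together with $G\in\outcl\infty$. I would start from the structure already extracted in Theorem~\ref{tm:non-inft}: since $G\in\outcl\infty$, the ordering $\<\ecal,\preceq\>$ of $\equiv$-classes has a last element $X_{\top}$ (otherwise $G\notin\outcl\infty$). Pick $a$ in $X_{\top}$; then $V=\outc G\infty a$, i.e. every vertex is reachable from $a$, but since $G\notin\outcl 3$ we have $\outc G3 A\ne V$ for every independent $A$, in particular for every singleton and for every maximal independent set. The idea is to stratify $V$ by "distance" in a suitable sense from the top class and show this stratification, after collapsing, is exactly $\testt$: the backward edges of $\testt$ (all $(x,y)$ with $x\ge y$) come from the tournament edges that respect the $\preceq$-order, and the forward edges $(x,x+1)$ record that you can only climb the $\preceq$-ladder one level at a time via length-one edges — this is where $G\notin\outcl 3$ (as opposed to $\outcl\infty$) must be used, to force that no level can be "skipped" in a bounded number of steps, so that the ladder is an $\omega$-chain climbed one rung per edge. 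Concretely I expect to define ${\varphi}(v)$ in terms of the least $k$ such that $v\in\outc G{k+1}{a}$ or a similar level function, verify it is well-defined and onto $\mathbb N$ (onto-ness uses $G\notin\outcl 3$ to rule out that all levels above some point are reachable in $\le 3$ steps, which would give $G\in\outcl 3$), and check the homomorphism property: a tournament edge $(u,v)$ either goes "downward" in level, giving a $\testt$-edge $({\varphi}(u),{\varphi}(v))$ with ${\varphi}(u)\ge{\varphi}(v)$, or goes "up" by exactly one level, giving the edge $({\varphi}(v),{\varphi}(v)+1)$ type after relabeling.

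The main obstacle I anticipate is making the level function genuinely well-defined and proving it increases by at most one along forward edges while being $\ge$ along all others — equivalently, proving that the $\preceq$-ordering on classes, restricted appropriately, is an $\omega$-chain in which each class is separated from the next by a "gap" that length-$\le 2$ paths cannot jump. This is precisely the point where the distinction between $\outcl 3$ and $\outcl\infty$ bites, and I would expect to need a careful argument (perhaps by contradiction: if some vertex at a high level were reachable in $\le 3$ steps from a low one, propagate this to show a maximal independent set witnesses $G\in\outcl 3$) rather than a direct construction. Once that separation is established, identifying the quotient with $\testt$ rather than merely with $\testinf$ is exactly the statement that the forward (climbing) edges are present, completing the picture.
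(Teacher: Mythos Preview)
Your cycle $(ii)\Rightarrow(i)\Rightarrow(iii)\Rightarrow(ii)$ is fine in principle, but the implication $(i)\Rightarrow(iii)$ as you set it up is where the real gap lies, and the paper avoids it entirely by routing through $(ii)$ instead.

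First a minor point on $(iii)\Rightarrow(ii)$: you are overcomplicating it. In a tournament an independent set is a single vertex, full stop; there is no need to invoke $\equiv$-classes or Theorem~\ref{tm:non-inft}. If $B=\{b\}$ and ${\varphi}(b)=k$, then ${\varphi}[\outc Gn b]\subseteq\{0,\dots,k+n\}$, and surjectivity of ${\varphi}$ finishes it.

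The substantive gap is in $(i)\Rightarrow(iii)$. Your level function ${\varphi}(v)=\min\{k:v\in\outc Gk a\}$ is the right object (and is exactly what the paper uses), and the homomorphism property is automatic: an edge $(u,v)$ forces ${\varphi}(v)\le{\varphi}(u)+1$, which in $\testt$ is always an edge. The problem is surjectivity. To get ${\varphi}$ onto $\mathbb N$ you need $\outc Gn a\ne V$ for \emph{every} $n$, and that is precisely condition $(ii)$, not $(i)$. Your sketch acknowledges this (``onto-ness uses $G\notin\outcl 3$ to rule out\ldots'') but offers no mechanism, and the detour through $\equiv$-classes and $\preceq$ does not supply one.

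The missing idea---and it is the whole content of the theorem---is a short direct proof of $\neg(ii)\Rightarrow\neg(i)$. Suppose $V=\outc Gn x$ for some $n\ge 3$, chosen minimal. Pick $y\in\outc Gn x\setminus\outc G{n-1}x$. Because $G$ is a tournament and $y\notin\outc G{n-1}x$, every $z\in\outc G{n-2}x$ must satisfy $(y,z)\in E$ (otherwise $(z,y)\in E$ would put $y$ in $\outc G{n-1}x$). Hence $\outc G{n-2}x\subseteq\outc G1 y$, so $\outc G{n-1}x\subseteq\outc G2 y$, so $V=\outc Gn x\subseteq\outc G3 y$. Thus $G\in\outcl 3$. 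This single ``step back to a far vertex'' trick is what collapses all the $\outcl n$ classes for $n\ge 3$, and once you have it, $(ii)\Rightarrow(iii)$ via the distance function is immediate. Your proposal circles around this without landing on it.
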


\begin{proof}
(iii) clearly implies (ii): if ${\varphi}(x)=k$ then
${\varphi}(y)\le k+n$ for each $y\in \outc Gnx$.

To prove that (i) implies (ii) assume  that $G\in \outcl n$ for some
$n\ge 3$ and we show $G\in \outcl 3$. Fix $x\in V$ and $n\ge 3$ such
that $V=\outc Gnx$ but $V\ne \outc G{n-1}x$. Pick $y\in \outc
G{n}x\setm \outc G{n-1}x$. We claim that $V=\outc G3y$. Indeed,
$\outc G{n-2}x\subs \outc G1y$ because $y\notin \outc G{n-1}x$. 
Hence  $\outc G{n-1}x\subs \outc
G1{\outc G{n-2}x}\subs \outc G2y$ and so finally we obtain that
$V=\outc G{n}x\subs \outc G1{\outc G{n-1}x}\subs \outc G3y$.

Assume finally that (ii) holds. 
Since $G\in \outcl \infty$   there is
$x\in V$ with $V=\outc G\infty x$. Then define ${\varphi}:V\to
{\mathbb{N}}$ as follows: ${\varphi}(y)=\min\{n:y\in \outc Gnx\}$.
${\varphi}$ is clearly a homomorphism and it is onto because $\outc
G1n\ne V$ for $n\in \mathbb N$.
\end{proof}

\begin{theorem}\label{tm:o3no2}
There is an  infinite tournament $T\in \outcl 3 \setm \outcl 2$.
\end{theorem}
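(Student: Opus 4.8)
To build a tournament $T\in\outcl 3\setminus\outcl 2$ I would start from the obvious necessary feature: by Theorem \ref{tm:inft}, a tournament not in $\outcl 2$ is in $\inout 11$, so the "sink side" must be nontrivial; and by Theorem \ref{tm:nott}, membership in $\outcl 3$ for a tournament in $\outcl\infty$ is equivalent to admitting a surjective homomorphism onto $\testt$. So the goal is a tournament with a homomorphism onto $\testt$ (this forces $T\in\outcl 3$ via the witness sitting at level $0$, once we check it really lands in $\outcl 3$ and not $\outcl 2$) while simultaneously ruling out any two-element independent… no, any independent set $A$ with $V=\outc{}2A$. Since $T$ is a tournament, an independent set is a single vertex, so $T\in\outcl 2$ iff there is $a$ with $V=\outc{}2a$; we must kill every such $a$.

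First I would fix the combinatorial skeleton. Take $\testt=\<\mathbb N,E_3\>$ and "blow up" each integer $n$ into an infinite set $V_n$ of vertices, directing all edges from $V_m$ to $V_n$ whenever $(m,n)\in E_3$ with $m\ne n$ (i.e. whenever $m>n$ or $m=n-1$ is false — more precisely following $\testt$: down-edges to everything below and the single up-edge $n\to n+1$), and choosing some tournament structure inside each $V_n$. The projection $\varphi(v)=n$ for $v\in V_n$ is then the required surjective homomorphism onto $\testt$, so by Theorem \ref{tm:nott}, $T\in\outcl 3$ as long as $T\in\outcl\infty$, which holds because from any $x\in V_0$ one reaches $V_n$ in at most, say, $2n$ steps using the up-edges and down-edges, so $\outc{}\infty x=V$.

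Second, and this is where the real work is, I must arrange the internal structure of the $V_n$ so that $T\notin\outcl 2$. Fix any candidate witness $a\in V_n$. Vertices of $V_{n+1}$ can only be reached from $a$ via the up-edge block $V_n\to V_{n+1}$, hence in one step from any element of $V_n$ and in two steps from $a$ only through some intermediate $a'\in V_n$; but a vertex $w\in V_{n+2}$ is reachable from $a$ in $\le 2$ steps only if $w\in\outc{}1{a'}$ for $a'$ with $a\to a'$, i.e. only through a single $V_n$-successor — and that reaches nothing in $V_{n+2}$ in one more step except via $V_{n+1}\to V_{n+2}$, which is a second step already spent. So $V_{n+2}\cap\outc{}2a=\emptyset$, and we are done: no single vertex $2$-dominates, so $T\notin\outcl 2$. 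The catch is that I must double-check that inside-$V_n$ edges (which I have freedom to choose — take each $G[V_n]\cong\<\mathbb Z,<\>$, say) do not secretly create a length-$\le 2$ path from $V_n$ into $V_{n+2}$; but the only edges leaving $V_n$ go to $\bigcup_{m\le n-1}V_m\cup V_{n+1}$, never directly to $V_{n+2}$, so a path from $a$ of length $2$ staying above level $n$ ends in $V_{n+1}$ at the latest. Hence the block $V_{n+2}$ is never hit.

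**The main obstacle.**
The delicate point is not the existence of the homomorphism onto $\testt$ (routine once the blow-up is defined) but making absolutely sure that \emph{every} vertex fails to be a quasi-kernel-witness — including vertices in "high" levels $V_n$ for large $n$, and including the interaction with whatever tournament we put inside each $V_n$. The clean way is the level argument above: for $a\in V_n$, the set $\outc{}2a$ is contained in $V_n\cup\bigcup_{m<n}V_m\cup V_{n+1}\cup(\text{at most one step beyond }V_{n+1})$, and one verifies $\outc{}2a\cap V_{n+2}=\emptyset$ regardless of the internal edges, because the only way out of level $n+1$ is the block-edge to level $n+2$, which costs the second step. I would present this as a short lemma ("$\outc{}2a\subseteq V\setminus V_{n+2}$ for $a\in V_n$"), and then $T\in\outcl 3\setminus\outcl 2$ follows from Theorem \ref{tm:nott} and this lemma. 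One should also confirm $T$ is genuinely a tournament (every pair comparable: pairs within a $V_n$ by the internal tournament, pairs across distinct levels by the block-edges) and infinite, both immediate. I expect the entire argument to be only a few lines once the blow-up construction is written down precisely.
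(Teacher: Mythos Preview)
Your plan contains a fatal misreading of Theorem~\ref{tm:nott}. That theorem says that for a tournament $G\in\outcl\infty$, the existence of a surjective homomorphism $G\to\testt$ is equivalent to $G\notin\outcl 3$, not to $G\in\outcl 3$. So the moment your blow-up admits the projection $\varphi:T\to\testt$ as a surjective homomorphism (and it does, since loops $(n,n)$ lie in $E(\testt)$), your tournament is automatically \emph{outside} $\outcl 3$, which is the opposite of what you need. You can also see this directly: for $a\in V_n$, one step reaches $V_{n+1}$, a second step reaches $V_{n+2}$, but $V_{n+3}$ is never reached in two steps, and more generally $\outc{}{k}{a}\subseteq V_0\cup\dots\cup V_{n+k}$, so $\outc{}{k}{a}\ne V$ for every finite $k$. (Incidentally, your claim that $\outc{}2a\cap V_{n+2}=\emptyset$ is also wrong: the path $a\to b\to w$ with $b\in V_{n+1}$, $w\in V_{n+2}$ has length~$2$.) Your construction thus lands in $\outcl\infty\setminus\outcl 3$, not in $\outcl 3\setminus\outcl 2$.

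The point of the theorem is precisely that the gap between $\outcl 2$ and $\outcl 3$ for tournaments is subtle: any example must \emph{avoid} a homomorphism onto $\testt$, so a naive level-structure cannot work. The paper's proof proceeds by an entirely different route, introducing (in Section~\ref{s:rec}) a recursive construction $G\mapsto G^\infty$ from a finite ``terminated'' tournament $G=(V,E,T)$, and proving two characterizations: $G^\infty\in\outcl 3$ iff every nonterminal vertex has an in-neighbour (Theorem~\ref{tm:go3no2A}), and $G^\infty\in\outcl 2$ iff some terminal vertex $2$-dominates $G$ (Theorem~\ref{tm:go3no2B}). A specific $4$-vertex tournament with a single terminal vertex then satisfies the first criterion but not the second.
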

\noindent The proof is based on a construction method we will
develop in Section \ref{s:rec} so it will be presented  just after
Theorem \ref{tm:go3no2B}.

\begin{problem}
Find a characterization of $G\notin \outcl 2$  a la {\rm Theorem
\ref{tm:nott}}.
\end{problem}

\section{Infinite digraphs  generated by a finite structure}
\label{s:rec}
\noindent
To prove Theorem \ref{tm:o3no2} we develop a recursive method to
construct infinite digraphs from certain finite ones and we
investigate the properties of the graphs which can be obtained in
this way.

\begin{definition}
A {\em \grt} is a triple $G=(V,E,T)$, where $(V,E)$ is a digraph
and $\empt\ne T\subs V$. The elements of $T$ are the
{\em terminal vertices of $G$}, the elements of $V\setm T$ are the
{\em nonterminal vertices of $G$}.
For a \grt\ $G=(V,E,T)$ write
$V_G=V$, $E_G=E$, $T_G=T$ and $N_G=V_G\setm T_G$.
\end{definition}

Assume that we have a \grt\ $G=(V,E,T)$. Write $N=V\setm T$.
Construct a new \grt\ $G\bigodot G=(W,F,S)$ from $G$ as follows:
keep the terminal vertices and blow up  each nonterminal vertex $v$
to a (disjoint) copy of $G$ denoted by $G_v$.  So we can set
\begin{equation}
W=T\cup (N\times V)
\end{equation}
The edges are ``inherited'' from $G$
in the natural way:
\begin{equation}
F=\{(x,y):(x(\Delta(x,y)), y(\Delta(x,y)))\in E\},
\end{equation}
where $\Delta(x,y)=\min\{i:x(i)\ne y(i)\}$. (Here every vertex $x$
can be described with a length at most 2 sequence: $x(0)$ gives the
position of our point $x$ within the original $G$ copy, while $x(1)$
gives its position within the new $G$ copy inserted into $x(0)$ if
this was not a terminal point.)

Define terminal and non-terminal vertices of the digraph $G \bigodot
G$ in the natural way: $S=T\cup (N\times T)$ is the set of the
terminal vertices. Hence  $N\times N$ is the set of the non-terminal
vertices of  $G\bigodot G$.
\begin{center}
\psfrag{g}{$G$}
\psfrag{gdotg}{$G\bigodot G$}
\includegraphics[keepaspectratio, width=10cm]{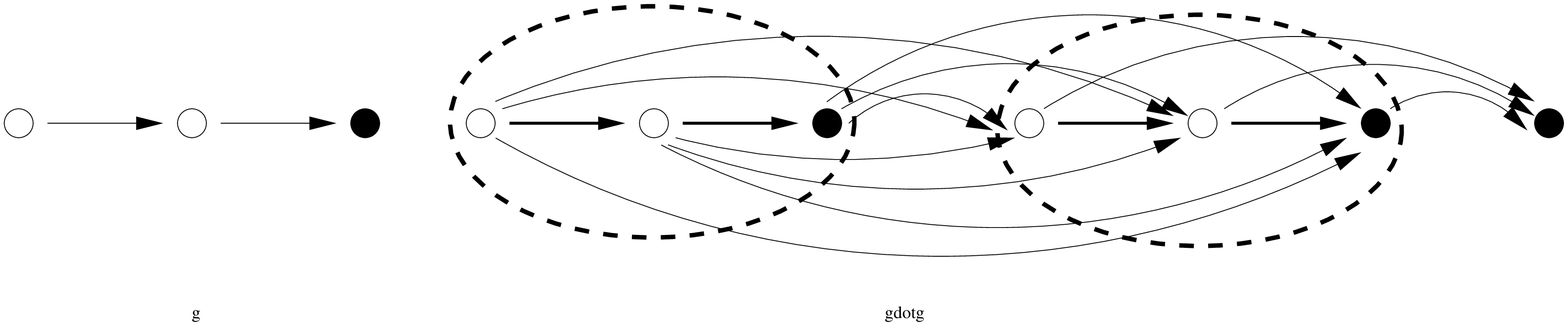}
\end{center}

\noindent Observe that
\begin{equation}\label{javit}
G[T]\text{ is a induced subgraph of }(G\bigodot G)[S].
\end{equation}
Now we can repeat the procedure above using  $G\bigodot G$ instead
of $G$ to get $(G\bigodot G)\bigodot G$. (Here the history of every
vertex can described with a length at most 3 sequence.)  Hence we
obtain a sequence $\<G_n:n\in \mathbb N\>$ of \grts,
$G_n=\<V_n,E_n,T_n\>$ such that
\begin{equation}
G_0[T_0]\subs G_1[T_1]\subs G_2[T_2]\subs \dots
\end{equation}
Take
\begin{equation}
G^\infty=\bigcup\{G_n[T_n]:n\in\mathbb N\}.
\end{equation}
This was the informal definition of $G^\infty$. The formal
definition  is much shorter:
\begin{definition}
If $G=(V,E,T)$ is  \grts, $N=V\setm T$ then
define the digraph $G^\infty=(N^*\concat T, F)$ as follows:
\begin{equation}\notag
F=\{(x,y):(x(\Delta(x,y)), y(\Delta(x,y)))\in E\},
\end{equation}
where $\Delta(x,y)=\min\{i:x(i)\ne y(i)\}$. \hfil \qed
\end{definition}

\noindent We will write sometimes $V^\infty$ instead of
$V(G^\infty)$ and $E^\infty$ instead of $E(G^\infty).$ First we
prove two theorems which will give the example needed in Theorem
\ref{tm:o3no2}.
\begin{theorem}\label{tm:go3no2A}
Let $G=(V,E,T)$ be a finite \grt. Then
the followings are equivalent:
\begin{enumerate}[{\rm (i)}]
\item  $G^\infty\in \outcl 3$,
\item  $G^\infty\in \outcl \infty$,
\item  $\inc {}1v\ne \{v\}$ for each
$v\in V\setm T$.
\end{enumerate}
\end{theorem}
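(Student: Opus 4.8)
The equivalences will be proved in the cycle (i)$\Rightarrow$(ii)$\Rightarrow$(iii)$\Rightarrow$(i), with (i)$\Rightarrow$(ii) being trivial since $\outcl 3\subseteq\outcl\infty$. For (ii)$\Rightarrow$(iii), I would argue contrapositively: suppose some nonterminal $v_0\in V\setm T$ has $\inc{}1{v_0}=\{v_0\}$, i.e.\ $v_0$ has no incoming edge in $G$ except possibly a loop, so in particular no edge of $G$ enters $v_0$ from another vertex. The point is that this ``source-like'' behavior of $v_0$ propagates upward through the $\bigodot$-construction: in $G^\infty$ consider the increasing chain of vertices obtained by iterating the blow-up at $v_0$, that is the sequences $v_0, v_0v_0, v_0v_0v_0,\dots$ (more precisely, strings in $N^*\concat T$ whose every nonterminal coordinate equals $v_0$). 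For any vertex $x\in V^\infty$ and any such long $v_0$-string $s$, I claim $(x,s)\notin F$ whenever $x\ne s$: indeed $\Delta(x,s)$ is a coordinate where $s$ still equals $v_0$ (a nonterminal coordinate, if $s$ is long enough relative to $x$), and then $(x(\Delta),s(\Delta))=(x(\Delta),v_0)\in E$ would contradict $\inc{}1{v_0}=\{v_0\}$ together with $x(\Delta)\ne v_0$. Hence no such $s$ lies in $\outc{G^\infty}\infty x$ for any $x$ once $s$ is long enough, so $V^\infty\ne\outc{G^\infty}\infty x$ for every $x$, and no independent set can witness $G^\infty\in\outcl\infty$ either (an independent set $A$ giving $V^\infty=\outc\infty A$ would have to contain one of these long $s$, but then every $t$ with a longer $v_0$-string is unreachable from $A$). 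This gives $G^\infty\notin\outcl\infty$, proving the contrapositive of (ii)$\Rightarrow$(iii).

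\textbf{The main step, (iii)$\Rightarrow$(i).} Assume $\inc{}1v\ne\{v\}$ for every $v\in N=V\setm T$; I must produce an independent set $A\subseteq V^\infty$ with $V^\infty=\outc{G^\infty}3A$. The natural candidate is built coordinatewise. For each nonterminal $v\in N$, the hypothesis gives a vertex $f(v)\in V$ with $f(v)\ne v$ and $(f(v),v)\in E$. Iterating, from any string $x=x(0)x(1)\cdots x(k)$ in $V^\infty$ (with $x(k)\in T$, $x(0),\dots,x(k-1)\in N$) one reaches the terminal $x(k)$ in at most... — but this is exactly the wrong direction; what I actually want is a single independent set that 3-reaches everything. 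Here is the cleaner approach: work inside a single copy first. A finite \grt\ $G$ with $\inc{}1v\ne\{v\}$ for all nonterminal $v$ — apply Theorem~\ref{tm:cl} (Chv\'atal--Lov\'asz) to the finite digraph $(V,E)$ to get a quasi-kernel $A_0$; the condition on nonterminals should let us arrange $A_0\subseteq T$, or failing that, control how $A_0$ interacts with the blow-up. Then in $G\bigodot G$ take $A_1 = $ (copy of $A_0$ inside each $G_v$ for $v$ in the quasi-kernel's ``reach'', suitably thinned to terminal coordinates), and check $W=\outc{}3{A_1}$ using \eqref{javit} and the edge rule $F$; the extra jump from $2$ to $3$ is the room absorbed by the $\Delta$-mechanism when a path must cross from one copy into a sub-copy. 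Finally pass to the limit: set $A=\bigcup_n (A_n\cap T_n)\subseteq\bigcup_n T_n = V^\infty$ and verify independence and $V^\infty=\outc{}3A$ coordinate-by-coordinate on $\Delta$.

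\textbf{Where the difficulty lies.} The genuinely delicate point is the base case: showing that $G$ finite with no ``isolated-in'' nonterminal admits a quasi-kernel living in (or compatible with) $T$, and more importantly that when we iterate $\bigodot$ the quasi-kernels can be chosen \emph{coherently} so their terminal-restrictions nest and their union is independent in $G^\infty$. The edge relation $F$ is governed entirely by the first differing coordinate $\Delta(x,y)$, so independence of $A$ reduces to: for $a\ne a'$ in $A$, at coordinate $\Delta(a,a')$ the two values $a(\Delta),a'(\Delta)$ are $\compl G$-adjacent or at least non-adjacent in both directions in $E$ — this forces $A$ to be ``independent on every coordinate'' in a strong uniform sense, which is why a single fixed quasi-kernel $A_0$ of $G$, copied identically into every blown-up position, is the right object. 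Verifying $\outc{}3A = V^\infty$ then amounts to a short case analysis on whether a target vertex $y$ agrees with some $a\in A$ up to a nonterminal coordinate (use the quasi-kernel property of $A_0$ at that coordinate, costing $\le 2$ steps) plus at most one extra step to descend into the correct sub-copy — hence length $3$. The bookkeeping with the sequence representation $N^*\concat T$ is where I expect the proof to get longest, but no single step is deep once the ``copy a fixed quasi-kernel into every coordinate'' idea is in place.
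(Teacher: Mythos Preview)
Your (ii)$\Rightarrow$(iii) via the $v_0$-prefix length is exactly the paper's argument, packaged there as the surjective homomorphism $\varphi(s)=\min\{n:s(n)\ne v_0\}$ from $G^\infty$ onto $\testinf$, followed by an appeal to Theorem~\ref{tm:non-inft}. (Your parenthetical ruling out independent witnesses is not quite right as written---an infinite independent $A$ could in principle have unbounded $v_0$-prefix lengths---but the paper is equally terse at this step.)

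The genuine gap is in (iii)$\Rightarrow$(i). The paper does \emph{not} copy the quasi-kernel into every blown-up position: it takes a quasi-kernel $A\subseteq V$ of the finite $(V,E)$, fixes one $t\in T$, and lifts each $a\in A$ to a single top-level vertex $a'$ (namely $a'=a$ if $a\in T$, $a'=a\concat t$ if $a\in N$), obtaining a finite independent $K=\{a':a\in A\}$. Your ``copied everywhere'' set, read most naturally as $(A\cap N)^*\concat(A\cap T)$, is indeed independent for the coordinatewise reason you give, but it is \emph{empty} whenever $A\cap T=\emptyset$, and condition (iii) does not force any quasi-kernel of $G$ to meet $T$ (take $T=\{t\}$, $N=\{a,b\}$, $E=\{(a,b),(b,a),(a,t),(b,t)\}$: here (iii) holds, yet the only quasi-kernels are $\{a\}$ and $\{b\}$). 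More importantly, you have mislocated where (iii) enters: it is not used to push $A$ into $T$ or to make nested copies cohere. In the paper's proof, given a target $x=y\concat s$ with $y\ne\emptyset$, one applies (iii) to the nonterminal $y(0)$ to obtain some $w$ with $(w,y(0))\in E$; since $w'(0)=w\ne y(0)=x(0)$ this gives $(w',x)\in E^\infty$, and the quasi-kernel property of $A$ then yields $w'\in\outc{G^\infty}2K$ (via the corresponding length-$\le 2$ path at coordinate $0$), whence $x\in\outc{G^\infty}3K$. That single invocation of (iii), at the top coordinate only, is the entire ``$+1$'' taking you from $2$ to $3$; no recursion into sub-copies is needed.
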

\begin{proof} It is clear that (i) implies (ii).

Assume that (iii) fails: i.e. $\inc G1v=\{v\}$ for some $v\in V\setm
T$. Define ${\varphi}:V(G^\infty) \to {\mathbb{N}}$ as follows:
${\varphi}(s)=\min \{n:s(n)\ne v\}$. Then ${\varphi}$ is a
surjective homomorphism from $G^\infty$ onto $\testinf$, so
$G^\infty \notin \outcl \infty$ (See Theorem~\ref{tm:non-inft}).
Thus (ii) implies (iii).

Assume (iii). For $v\in V$ define $v'\in V^\infty$ as follows:
$v'=v$ for $v\in T$ and $v'=v\concat t$ for $v\in V\setm T$, where
$t\in T$ is arbitrary.

Let $A\subs V$ be independent such that $V=\outc G2A$. Put
$K=\{a':a\in A\}$. Then $K$ is clearly independent in $G^\infty$. We
claim that $V^\infty=\outc {G^\infty}3{K}$.

Let $x\in V^\infty$. Write $x=y\concat s$, where $s\in T$.   If
$y=\empt$ then either $s\in A$ and so $s'=s\in K$ as well, or
$s\notin A$ and so $s\in \outc G2a$ for some $a\in A$. If $(a,s)\in
E$ then $(a',s)\in E^\infty$. If $abs$ is a directed path of length
$2$ in $G$ then $a'b's$ is a directed path of length $2$ in
$G^\infty$ In any case  $x=s\in \outc {G^\infty}2{a'}$.

Assume now that $y\ne\empt$. Then, by (iii), there is $w\in V$ such
that $(w,y(0))\in E$. Then $(w',x)\in E^\infty$. If $w\in A$ then
$w'\in K$ and $(w',x)\in E^\infty$. If $w\notin A$ then $w\in \outc
G2a$ for some $a\in A$. If $(a,w)\in E$ then $(a',w')\in E^\infty$.
If $abw$ is a directed path of length $2$ in $G$ then $a'b'w'$ is a
directed path of length $2$ in $G^\infty$ In any case  $w'\in \outc
{G^\infty}2{a'}$ and so $x\in \outc {G^\infty}3{a'}$.
\end{proof}

\begin{theorem}\label{tm:go3no2B}
Let $G=(V,E,T)$ be a finite \grt. 
If $(V,E)$ is a 
tournament then
the followings are also equivalent:
\begin{enumerate}[{\rm (i)}]
\item $G^\infty \in \outcl 2$,
\item  there is $v\in T$ with $V=\outc G2v$.
\end{enumerate}
\end{theorem}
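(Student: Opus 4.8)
The plan is to prove both implications directly, working with the formal description of $G^\infty$ as vertices $N^*\concat T$ with edges inherited via the first-coordinate-of-disagreement rule $\Delta(x,y)$.

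\textbf{From (ii) to (i).} Suppose $v\in T$ with $V=\outc G2v$. I would take $B=\{v\}\subs T\subs V^\infty$, which is trivially independent, and claim $V^\infty=\outc{G^\infty}2v$. Let $x\in V^\infty$, say $x=y\concat s$ with $s\in T$, and consider $u=x(0)$, the first letter of $x$ (if $x=s$ then $u=s$). Since $V=\outc G2v$ in the tournament $G$, there is a path of length at most $2$ in $G$ from $v$ to $u$. The key observation is that any $G$-path among letters lifts to a $G^\infty$-path among vertices that \emph{start with those letters}: if $(a,b)\in E$ then for any $x',y'\in V^\infty$ with $x'(0)=a$, $y'(0)=b$ and $x'(0)\neq y'(0)$ we have $\Delta(x',y')=0$ and hence $(x',y')\in E^\infty$. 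So pick the path $v\to w\to u$ (or $v\to u$) in $G$; if $v=u$ we are done with $x\in B$; if $v\to u$ directly then $(v,x)\in E^\infty$ since $v\in T$ is a single letter distinct from $x(0)=u$; and if $v\to w\to u$ then choosing the representative $w'=w$ if $w\in T$, or $w'=w\concat t$ (any $t\in T$) if $w\in N$, we get $(v,w')\in E^\infty$ and $(w',x)\in E^\infty$, so $x\in\outc{G^\infty}2v$. A small point to check: when $v=u=x(0)$ but $x\neq v$, i.e. $x=v\concat\cdots$ with $v\in N$ — but $v\in T$ by hypothesis, so $x=v$ forces $x=v$; thus this case cannot arise, and the argument is clean.

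\textbf{From (i) to (ii).} Suppose no such $v$ exists; I want $G^\infty\notin\outcl 2$. So for every $v\in T$, $\outc G2v\subsetneq V$. Let $B\subs V^\infty$ be any independent set; I must find a vertex not in $\outc{G^\infty}2B$. The idea is to descend through coordinates choosing "bad" letters. First, since $(V,E)$ is a tournament, $G[T]$ is a tournament, so there is a vertex $b_0\in T$ with $(t,b_0)\in E$ for every $t\in T\setm\{b_0\}$ — the "$\le$-largest" element of $T$ if we think of the tournament order, though in general just take a vertex of $T$ with no out-edge within $T$ to a $\preceq$-later class; more carefully, pick $b_0$ to be a terminal vertex that is a \emph{sink within $G[T]$} is too strong, so instead I pick $b_0\in T$ maximizing reachability-failure. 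Actually the right move: since every $\outc G2 v\neq V$ for $v\in T$, and $T$ is finite, choose $b_0\in T$ with $b_0\notin\outc G2{b}$ for a suitable $b$ — this needs care. I would set up a recursion: suppose we have chosen a finite prefix $p\in N^*$ such that no vertex of $B$ with that prefix... The cleaner approach is contrapositive via homomorphism: build a homomorphism $\psi:G^\infty\to\testtwo$ or exhibit an infinite descending structure. Given that the \emph{previous} theorem characterizes $\outcl\infty$ via homomorphism onto $\testinf$ or $\testinfk$, I expect the analogous tool here: $G^\infty\in\outcl 2$ fails iff there is a homomorphism onto some fixed obstruction tournament. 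So I would try to show directly that $B$ independent in $G^\infty$ forces $B$ to "live on a branch", and then that no length-$2$ reach from $B$ can hit a vertex $b_0\concat b_1\concat b_2\concat\cdots$ built by always picking, at stage $i$, a letter $b_i$ that is not reached in $\le 2$ steps within $G$ from any single letter — this is exactly where hypothesis (ii)-fails is used. The main obstacle is precisely this construction: making the recursive choice of letters so that the resulting infinite (or long-enough finite, terminating in $T$) vertex genuinely avoids $\outc{G^\infty}2B$, handling the interaction between the $\Delta$-rule at coordinate $0$ (covered by $\outc G2{b_0}\neq V$) and deeper coordinates (covered by independence of $B$ pushing the relevant disagreement coordinate deeper). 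I would organize the verification by cases on $\Delta(x,b)$ for $b\in B$ and the target vertex $x$, using at coordinate $0$ that $x(0)$ is chosen outside $\outc G2{B(0)}$ where $B(0)=\{b(0):b\in B\}$ — but $B(0)$ may have several letters, so this is where I genuinely need $(V,E)$ to be a tournament: $B(0)$ independent-in-$G^\infty$-compatible forces $|B(0)|$ manageable, and I pick $x(0)$ to be a common non-target. Closing that gap — showing a single first letter $x(0)$ can simultaneously avoid $\outc G2{b(0)}$ for all $b\in B$ — is the crux, and I expect it reduces to: the largest element under the tournament quasi-order on $T$ works, precisely when no $v\in T$ has $\outc G2v=V$.
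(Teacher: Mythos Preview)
Your (ii)$\Rightarrow$(i) direction is correct and matches the paper's argument.

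Your (i)$\Rightarrow$(ii) direction has a genuine gap, and it stems from missing the one-line observation that makes the whole argument trivial: since $(V,E)$ is a tournament, $G^\infty$ is \emph{also} a tournament. Indeed, for distinct $x,y\in V^\infty$ the letters $x(\Delta(x,y))$ and $y(\Delta(x,y))$ are distinct elements of $V$, so exactly one of the two edges is present in $E$, hence exactly one of $(x,y),(y,x)$ lies in $E^\infty$. Consequently every independent set $B\subs V^\infty$ is a singleton, and all your worries about $B(0)=\{b(0):b\in B\}$ having several letters, about simultaneously avoiding $\outc G2{b(0)}$ for all $b\in B$, about a ``largest element under the tournament quasi-order on $T$'', simply evaporate. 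You do not need any recursive descent or homomorphism to an obstruction graph.

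With $B=\{s\}$, the paper's argument is short: write $s=r\concat t$ with $r\in N^n$ and $t\in T$, and let $X=\{z\in V^\infty:z\restriction n=r\}$. The induced subgraph $G^\infty[X]$ is a copy of $G^\infty$ in which $s$ corresponds to the terminal vertex $t$; since (ii) fails, $\outc G2t\ne V$, and this gives some $y\in X$ with $y\notin\outc{G^\infty[X]}2s$. For any $p\notin X$ we have $\Delta(s,p)=\Delta(y,p)<n$, so $(s,p)\in E^\infty$ iff $(y,p)\in E^\infty$; since $G^\infty$ is a tournament, $(s,p)\in E^\infty$ forces $(p,y)\notin E^\infty$, and hence no path $s\to p\to y$ exists through $p\notin X$. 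Combined with the choice of $y$ inside $X$, this yields $y\notin\outc{G^\infty}2s$, so $s$ does not witness $G^\infty\in\outcl 2$. Since $s$ was arbitrary, $G^\infty\notin\outcl 2$.
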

\begin{proof}
First of all observe that (ii) clearly implies (i): if  $V=\outc
G2v$ for some $v\in T$ then $V^\infty=\outc {G^\infty}2v$.

Assume now that (ii) fails and let $s\in V^\infty$ be arbitrary. We
will show that $\outc {G^\infty}2s\ne V^\infty.$ Let $s=r\concat t$
where $r\in (V\setm T)^n$ and $t\in T$. Let $X=\{r' \in V^\infty: r'
\restriction n =r\}$.

Then  $\outc G2t\ne V$ implies $\outc {G^\infty[X]}2s\ne X$. Pick an
arbitrary $y\in X\setm \outc {G^\infty[X]}2s.$ Then $y$ cannot be
reached with a length at most two directed path from $s$ within
$G^\infty[X].$ Similarly, if $p\notin X$ then $(s,p)\in E^\infty$ if
and only if $(y,p)\in E^\infty$ (since $\Delta(s,p)=\Delta(y,p)$),
therefore the vertex triplet $spy$ can not form a directed path of
length 2 from $s$ to $y$ in $G^\infty$. Hence $y\notin \outc
{G^\infty}2s$.
\end{proof}

\begin{proof}[{\bf Proof of Theorem \ref{tm:o3no2}}]
Consider the following  \grt: $G=(\{0,1,2,3\},E,\{0\})$, where
$$E=\{(0,1), (1,2), (2,3), (3,1), (3,0), (2,0)\}.$$
\nopagebreak
\psfrag*{y0}{0}
\psfrag*{y1}{1}
\psfrag*{y2}{2}
\psfrag*{y3}{3}
\begin{center}
\includegraphics[keepaspectratio,width=5cm]{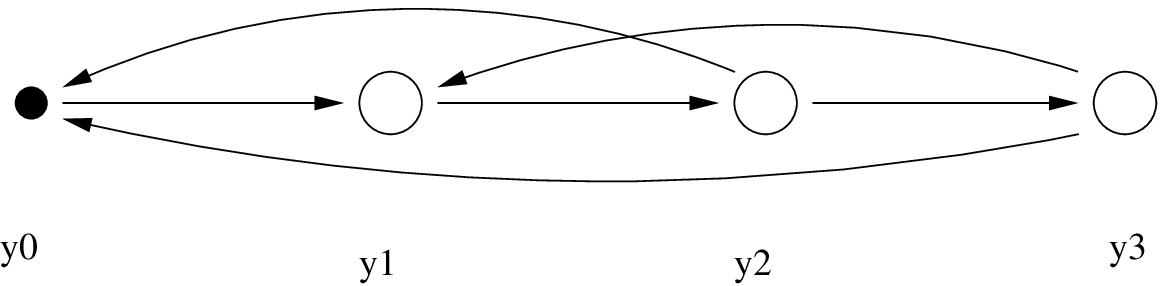}
\end{center}
$G$ is a finite tournament, so 
by Theorems \ref{tm:go3no2A} and \ref{tm:go3no2B} the tournament
$G^\infty\in \outcl 3\setm \outcl 2$.
\end{proof}

\begin{theorem}\label{tm:ginf}
If $G=\<V,E,T\>$ is a finite \grt\ then $G^\infty\in \inout 22$.
\end{theorem}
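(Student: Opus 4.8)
The plan is to exploit the self-similar block structure of $G^\infty$ together with the Chv\'atal--Lov\'asz theorem for the finite base $(V,E)$: lift a quasi-kernel of $(V,E)$ to $G^\infty$, throw the few vertices it misses into the quasi-sink half, and treat the offending blocks recursively, producing a partition witnessing $G^\infty\in\inout 22$. First a structural remark. Loops of $(V,E)$ are irrelevant, since $\Delta(x,y)\ge 1$ whenever $x(0)=y(0)$, so no pair $(x(0),x(0))$ is ever consulted; thus I may assume $(V,E)$ loopless. For $w\in V$ write $B_w=\{w\}\concat V^\infty$ if $w\in N=V\setm T$ and $B_w=\{w\}$ if $w\in T$; the $B_w$ partition $V^\infty$, each $B_w$ with $w\in N$ carries a copy of $G^\infty$ via $w\concat z\mapsto z$, and this repeats inside it. I will use repeatedly that between two \emph{distinct} blocks $B_w$, $B_{w'}$ every edge of $G^\infty$ is present precisely when $(w,w')\in E$; in particular the edges of a vertex $x\in B_w$ leaving $B_w$ depend only on $w$.

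Two cases are immediate. If some independent $A_0\subs T$ has $\outc G2{A_0}=V$, then $A_0$ read as depth-one vertices witnesses $G^\infty\in\outcl 2$: a vertex of $B_w$ with $w\in A_0$ is $w$ itself, and for $w\notin A_0$ a $G$-path of length $1$ or $2$ from $A_0$ to $w$ with interior vertex off $A_0$ covers all of $B_w$ in at most two steps using the cross-block edges. Dually an independent $B_0\subs T$ with $\inc G2{B_0}=V$ gives $G^\infty\in\incl 2$. So I may assume $T$ contains no quasi-kernel and no quasi-sink of $(V,E)$.

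In the remaining case take, by Theorem \ref{tm:cl}, a quasi-kernel $A$ of $(V,E)$, fix $t_0\in T$, and lift $A$ to the independent set $A'$ (each $a\in A\cap T$ to itself, each $a\in A\cap N$ to $a\concat t_0$). A short computation with the cross-block edges shows that $x\notin\outc{G^\infty}2{A'}$ can occur only when $x(0)=a\in A\cap N$ and $a$ is \emph{uncoverable}, i.e.\ there is no $G$-path of length $1$ or $2$ from $A$ to $a$ (if no such $a$ exists we are already done, $A'$ witnessing $G^\infty\in\outcl 2$); and then the part of $B_a$ reached from $A'$ is governed inside $B_a$ by $\outc{G^\infty}2{t_0}$. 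Since $A$ is independent there are no $G^\infty$-edges between $B_a$ and $B_{a'}$ for distinct uncoverable $a,a'\in A\cap N$, so these blocks are mutually independent. The intended finish is to split each such $B_a\cong G^\infty$ into an $\outcl 2$-part $X_a$ (nonempty, containing an entry point already reached by $A'$) and an $\incl 2$-part $Y_a$, and then to verify that $Y:=\bigcup_a Y_a$ induces a disjoint union of $\incl 2$-digraphs, hence $G^\infty[Y]\in\incl 2$, while on $X:=V^\infty\setm Y$ the set obtained from $A'$ by replacing each seed $a\concat t_0$ ($a$ uncoverable) by an element of the local quasi-kernel of $X_a$, together with those local quasi-kernels, is a quasi-kernel; alternatively one glues the pieces by Theorem \ref{tm:general} with $m=\ell=1$.

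The hard part is that each $B_a$ is a copy of $G^\infty$ over the \emph{same} base $(V,E)$, and in the present case no block is ever ``easier'': a naive uniform recursion is circular and its fixpoint degenerates (one gets $Y=\empt$, forcing the false $G^\infty\in\outcl 2$), and one cannot repair this by absorbing a whole block into $Y$ either, since that would require $G^\infty\in\incl 2$. The genuine content is therefore to choose the splittings non-uniformly so that the parts $Y_a$ are substantial, and the clean way to do this is probably to abandon the recursion in favour of a transfinite construction of the two witnessing independent sets along an enumeration of the countable set $V^\infty$, in the spirit of the proof of Theorem \ref{tm:clocally}, Case~1: at each stage one routes the next vertex into $X$ or into $Y$ and extends the partial witnesses by finitely many vertices, the key facts being that every vertex is a finite word, that vertices of in-degree $0$ (those all of whose symbols are ``source-like'') are pairwise non-adjacent and may all be placed in one independent set, and that for a source-closed block $B_w$ ($w$ a source of $(V,E)$) every in-neighbour of a vertex of $B_w$ again lies in $B_w$, so these blocks decouple from the rest. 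Arranging this construction so that $G^\infty[X]\in\outcl 2$ and $G^\infty[Y]\in\incl 2$ — or, in the glued variant, so that the pieces satisfy the hypotheses of Theorem \ref{tm:general} with $m=\ell=1$ — is the crux of the proof; the cross-block reachability computations behind it are routine.
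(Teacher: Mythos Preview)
Your proposal is not a proof: you correctly diagnose that a uniform recursion on the self-similar blocks $B_a$ is circular, but you then defer the resolution to an unspecified transfinite construction, calling its arrangement ``the crux of the proof'' while leaving it undone. The observations about source-like vertices and source-closed blocks do not break the circularity either --- if every $a\in N$ has an in-neighbour in $G$ (as in the example following Theorem~\ref{tm:go3no2B}), there are no sources and those remarks are vacuous. So the argument stops exactly where the content begins; the ``routine cross-block computations'' you mention are indeed routine, but they are not what is missing.

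The paper's proof rests on a different case split, which you did not find: whether $N=V\setm T$ is independent in $G$. If it is, a separate argument (Theorem~\ref{tm:nfree}, itself a case analysis on how $T$ and $N$ interact) shows $G^\infty\in\incl 2\cup\outcl 2$ outright. If $N$ is not independent, one fixes a single edge $(x,y)\in E\cap(N\times N)$; this edge is the missing leverage. One then takes $A=\inc G1{\{x,y\}}\cup\outc G1{\{x,y\}}$, a quasi-kernel $B$ of $G[V\setm A]$ (not of all of $G$), sets $B_N=B\cap N$, $B_T=B\cap T$, fixes $t\in T$, and writes down explicit independent sets $K=(B_N{}^*\concat xt)\cup(B_N{}^*\concat B_T)$ and $L=B_N{}^*\concat yt$ in $G^\infty$. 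On the region $V_0=(B_N{}^*\concat A\concat V^*)\cap V^\infty$ the edge $(x,y)$ lets one route every vertex to either $x$ or $y$ in one step, which splits $V_0$ into an $\outcl 2$-part with quasi-kernel $B_N{}^*\concat xt$ and an $\incl 2$-part with quasi-sink $L$; the complement is covered by $K$ in two steps via the choice of $B$. The point is that a single edge inside $N$ supplies a concrete finite handle that replaces the recursion you were searching for.
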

\noindent We prove this theorem through a series of theorems and
lemmas.

\begin{theorem}\label{tm:nfree}
If $G=\<V,E,T\>$ is a finite \grt\ such that $V\setm T$ is
independent in $G$ then  $G^\infty\in \incl 2\cup \outcl 2$.
\end{theorem}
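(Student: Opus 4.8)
The plan is to analyze the structure of $G^\infty$ when $N = V\setminus T$ is independent, and to split into two cases according to whether the terminal part $G[T]$ "helps" reach everything in at most two steps. Recall that a vertex $x\in V^\infty = N^*{}^\frown T$ is a finite string whose last coordinate lies in $T$ and all earlier coordinates lie in $N$, and that $(x,y)\in E^\infty$ iff $(x(\Delta),y(\Delta))\in E$ where $\Delta=\Delta(x,y)$ is the first place the strings differ. Since $N$ is independent in $G$, there is \emph{no} edge of $E^\infty$ between two distinct strings whose first point of disagreement is a nonterminal coordinate of both; so every edge of $G^\infty$ has $\Delta$-coordinate landing in $T$ on at least one side. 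Concretely, $(x,y)\in E^\infty$ forces $x(\Delta)\in T$ or $y(\Delta)\in T$; and since $x(\Delta)=x(|x|-1)\in T$ only when $\Delta$ is the last index of $x$ (similarly for $y$), an edge from $x$ to $y$ means one of the two strings is an initial segment up to a terminal vertex of the common prefix, with the other extending past. I would first write this down cleanly as a combinatorial description of $E^\infty$.

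Next, fix $t\in T$ and consider the behaviour of $G[T]$ itself. \textbf{Case 1:} there is $v\in T$ with $T\subseteq \mathrm{Out}^{G[T]}_2(v)$, i.e.\ $G[T]\in\outcl 2$ (by Theorem~\ref{tm:cl} for finite digraphs this always gives \emph{some} witness, but here I want a single terminal source inside $T$). Actually the cleaner dichotomy is: \textbf{Case A:} some $v\in T$ has $T\subseteq\mathrm{Out}^{G[T]}_2(v)$, and \textbf{Case B:} not. In Case A, I claim $V^\infty=\mathrm{Out}^{G^\infty}_2(v)$, showing $G^\infty\in\outcl 2$: given any string $x=r{}^\frown s$ with $r\in N^*$, $s\in T$, if $r=\emptyset$ then $x=s\in T$ is reached from $v$ in $\le 2$ steps exactly as in $G[T]$ (the path stays inside the copy of $T$, which embeds by \eqref{javit}-type reasoning). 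If $r\ne\emptyset$, write $r=r'{}^\frown u$ with $u=r(0)\in N$; I need an edge into $x$ from something reachable from $v$ in one step. Here the independence of $N$ is the key leverage: I must produce $w\in V$ with $(w,r(0))\in E$ — but since $r(0)\in N$ and $N$ is independent, any edge into $r(0)$ comes from $T$, so I need $\mathrm{In}^{G}_1(r(0))\cap T\ne\emptyset$. This is where a subtlety enters and where I expect the main obstacle.

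\textbf{The main obstacle} is precisely controlling vertices $v\in N$ with $\mathrm{In}^G_1(v)\subseteq N$ or even $\mathrm{In}^G_1(v)=\emptyset$: such nonterminal vertices can only be reached in $G^\infty$ via the $\Delta$-mechanism from a \emph{shorter} string, and if no terminal vertex dominates them, the analogue of Theorem~\ref{tm:go3no2A}(iii) fails and $G^\infty$ may fall out of $\outcl\infty$ entirely — it maps homomorphically onto $\testinf$. But $\testinf=\gra{\mathbb N}{\ge}$ visibly lies in $\incl 2$ (take $A=\{0\}$; every $n$ has the edge $(n,0)$, in fact $\testinf\in\incl 1$). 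So in the bad case I would instead build the \emph{quasi-sink} witness: using (iii)-type failure, exhibit a surjective homomorphism $\varphi:G^\infty\to\testinf$ pulling back the independent set witnessing $\testinf\in\incl 1$, or more robustly, directly construct an independent $A\subseteq V^\infty$ with $V^\infty=\mathrm{In}^{G^\infty}_2(A)$ by taking, for each string, the descent toward smaller $\Delta$ values. Concretely: $G^\infty$ being "thin at the bottom" in the bad case means every vertex reaches a short string, and short strings reach $T$, inside which we again invoke the finite Chv\'atal--Lov\'asz theorem — now for \emph{sinks} — to finish. So the proof is a dichotomy: either a terminal dominating vertex exists and $G^\infty\in\outcl 2$, or it does not and then the nonterminal "roots" force $G^\infty$ to collapse downward onto a structure handled by $\incl 2$; I would organize the write-up as these two cases, with the routine verifications of $\mathrm{Out}_2$/$\mathrm{In}_2$ membership done by tracing paths through the $\Delta$-description of $E^\infty$ established at the start.
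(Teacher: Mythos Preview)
Your dichotomy is muddled and neither branch closes. In Case~A you want a single $v\in T$ to witness $V^\infty=\outc{G^\infty}2v$, but to reach a string $x$ with $x(0)\in N$ in two steps from $v$ you need some $w$ with $(v,w)\in E$ and $(w,x(0))\in E$; since $N$ is independent this forces $w\in T$, so what you actually need is $N\subseteq\outc G1{\outc{G[T]}1v}$, which your Case-A hypothesis (``$T\subseteq\outc{G[T]}2v$'') does not provide. In Case~B you slide from ``no single $v\in T$ dominates $T$ in two steps'' to ``some $u\in N$ has $\inc G1u=\{u\}$'', but these conditions are unrelated: the first is a statement about $G[T]$, the second about edges from $T$ into $N$. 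And even when such a $u$ exists, a surjective homomorphism $G^\infty\to\testinf$ does \emph{not} yield $G^\infty\in\incl 2$ by pullback --- preimages of independent sets need not be independent, and distance bounds go the wrong way under homomorphisms. So the ``collapse downward'' heuristic is not an argument.

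The paper organizes the proof around different properties of the finite $G$: (I)~$T\subseteq\outc G1N\cap\inc G1N$ and (II)~$N\not\subseteq\outc G1T$ and $N\not\subseteq\inc G1T$. Three short lemmas dispose of the cases where (I) or (II) fails, and none of them uses a single-vertex witness. The substantive case is when both (I) and (II) hold: setting $A=N\setminus\outc G1T$ (nonempty by (II), proper in $N$ by (I)) one takes the infinite, string-indexed set $K=A^*\concat(N\setminus A)\concat t$; it is independent because for any pair in $K$ the $\Delta$-coordinate lands in $N$, and one checks $\outc{G^\infty}1K\supseteq A^*\concat T$ and then $\outc{G^\infty}1{(A^*\concat T)}=V^\infty$, giving $G^\infty\in\outcl 2$. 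Your sketch does not anticipate this construction, which is where the real content of the theorem lies.
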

\begin{proof}[Proof of Theorem \ref{tm:nfree}]
Write $N=V\setm T$. We introduce two properties of $G$.
\begin{itemize}
\item[(I)]  $T\subs \outc G1N\cap \inc G1N$,
\item[(II)] $N\not\subs\outc G1T$ and $N\not\subs \inc G1T$.
\end{itemize}
\noindent We start with auxiliary lemmas.
\begin{lemma}\label{lm:at}
If there is an independent set $A\subs V$ such that $\outc G2A=V$
$[\inc G2A=V]$ and $T\cap A\ne \empt$ then $G^\infty\in \outcl 2$
$[G^\infty\in \incl 2]$.
\end{lemma}
\begin{proof}[Proof of Lemma \ref{lm:at}]  \prlabel{lm:at}
We will show $K=(A\cap N)^*\concat (A\cap T)$ is a quasi-kernel in
$G^\infty$. The set $K$ is clearly independent in $G^\infty$ because 
$A$ was independent in $G$.
 
Fix an element  $t\in T\cap A$.
For $x\in T$ let $\ext x=x$ and for $x\in  N$ let $\ext x=x\concat
t$.

Let $s\in V^\infty$. We show that $s\in \outc {G^\infty}2K$.
If $s\in A^*$ then $s\in K$, so we can assume that 
$s=s'\concat p$, where $p(0)\notin A$. 
Then there is $a\in
A$ such that either $(a,p(0))\in E $ or there is  $x\in V$ such that
$(a,x)\in E$ and $(x,p(0))\in E$. Then $s'\concat \ext {a}\in K$ and
in the first case $(s'\concat \ext {a}, s)$ is an edge in 
$ G^\infty$, and in the
second case $(s'\concat \ext {a}, s'\concat \ext {d},s)$ is a
directed path of length 2 in $G^\infty $. Therefore $s\in \outc
{G^\infty}2{s'\concat \ext {a}}\subs \outc {G^\infty}2{K}$.
\end{proof}
\begin{lemma}\label{lm:tio}
If {\rm (I)} fails then $G^\infty\in \incl 2\cup \outcl 2$, more
precisely, if   $T\not\subs \outc G1N$ $[T\not\subs   \inc G1N]$
then $G^\infty\in \outcl 2$ $[G^\infty\in \incl 2]$.
\end{lemma}

\begin{proof}[Proof of Lemma \ref{lm:tio}]\prlabel{lm:tio}
Assume that $t\in T\setm \outc G1N$. Let $B=\outc G1t$. Let $A'\subs
V\setm B$ be independent such that $V\setm B=\outc {G[V\setm
B]}2{A'}$. If there is no edge between $A'$ and $\{t\}$ then $A=A'\cup
\{t\}$ satisfies the assumptions of Lemma \ref{lm:at}. Hence
$G^\infty  \in \outcl 2$.

If there is an edge edge between $A'$ and $\{t\}$, 
then there is $a\in A'$ with $(a,t)\in E$
because $\outc G1t\cap B=\empt$. Thus $t\in \outc G1a$ and so
$a\notin N$, i.e. $a\in T$.
Hence $A'$ satisfies the assumptions of Lemma
\ref{lm:at}. Hence $G^\infty  \in \outcl 2$.
\end{proof}
\begin{lemma}\label{lm:nt}
If $G$ satisfies  {\rm (I)} but {\rm(II)} fails then $G^\infty\in
\incl 2\cup \outcl 2$, more precisely, if $N\subs \inc G1T$ $[N\subs
\outc G1T]$ then $G^\infty \in \incl 2$ $[G^\infty \in \outcl 2]$.
\end{lemma}
\begin{proof}[Proof of Lemma \ref{lm:nt}]
\prlabel{lm:nt} Let $t\in T$ be a fixed element and put
$K=\{y\concat t:y\in N\}$. $K$ is clearly independent. Let $s\in
V^\infty$. If $s\in T$ then by assumptions (I) we have $(s,y)\in E$ for some
$y\in N$ and so $s\in \inc {G^\infty}1{y\concat t}\subs \inc {G^\infty}1K$.

If $s(0)=x\in N$ then there is $s\in T$ with $(x,t)\in E$ by the
assumption $N\subs \inc G1T$. Then, by (I), there is $y\in N$ with
$(t,y)\in E$. Thus $xty$ is a directed path of length $2$
in $G$ and so $s\in \inc
{G^\infty}2{y\concat t}$ and $y\concat t \in K$.
\end{proof}
\noindent By Lemmas  \ref{lm:tio} and \ref{lm:nt} we can assume that
$G$ has properties (I) and (II). Let $A=N\setm \outc G1T$ and
$B=N\setm \inc G1T$. Hence $A\ne \empt$ and $B\ne \empt$ by (II).
Now (I) implies that $A\ne N$ and $B\ne N$ because
$N\cap \outc G1T\ne \empt$ yields $T\cap \inc G1N\ne \empt$.

Let $b\in B$ and $t\in T$ be fixed, and put
 $K=A^*\concat (N\setm A)\concat t$.
If $\{p,q\}\in \br K;2;$ then we have
$\{p(\Delta(p,q)),q(\Delta(p,q)\}\in \br N;2;$ and so there is no
edge between $p$ and $q$ in $G^\infty$. Hence $K$ is independent.

Let $L=A^*\concat T$. Now we have
\begin{equation}\label{eq:kl}
\outc {G^\infty}1K\supset L.
\end{equation}
Indeed, if $x\in A^*$ and  $s\in T$ then there is $c\in N$ with
$(c,s)\in E$. If $c\in N\setm A$ then $x\concat c \concat t\in K$
and $(x\concat c\concat t, x\concat s)\in E^\infty$. If $c\in A$
then  $x\concat c \concat b\concat t\in K$ and $(x\concat c\concat
b\concat t, x\concat s)\in E^\infty$

Moreover, we claim that 
\begin{equation}\label{eq:lv}
\outc {G^\infty}1L=V^\infty.
\end{equation}
Indeed, let $x\in V^\infty\setm L$. Let $n$ be maximal such that
$x\restriction n\in A^n$. Since $x\notin L$ we have $c=x(n)\in
N\setm A$. Hence $c\in \outc G1T$, so we can pick $s\in T$ with $(s,c)\in E.$
Then $(x\restriction n)\concat s\in L$ and $(x\restriction n\concat
s,x)\in E^\infty$. 

Hence $\outc {G^\infty}2K\supseteq \outc {G^\infty}1L=V^\infty$, i.e  
$G^\infty \in \outcl 2$. This concludes the proof of Theorem 
\ref{tm:nfree}.
\end{proof}

\begin{proof}[Proof of Theorem \ref{tm:ginf}]\prlabel{tm:ginf}
By Theorem \ref{tm:nfree} we can assume that there is an edge
$(x,y)\in E\cap (N\times N)$.

Let $A=
\inc G1{\{x,y\}}\cup \outc G1{\{x,y\}}$.
Let $B\subs V\setm A$ be independent such
that $\outc {G[V\setm A]}2B=V\setm A$. Write $B_N=B\cap N$ and
$B_T=B\cap T$. Let $t\in  T$ be fixed. Let
\begin{displaymath}
K_0=B_N{}^*\concat xt,\ K_1=B_N{}^*\concat B_T\quad \text{ and  }
\quad K=K_0\cup K_1,
\end{displaymath}
and
\begin{displaymath}
L=B_N{}^*\concat yt.
\end{displaymath}
Let
\begin{displaymath}
V_0=(B_N{}^*\concat A\concat V^*)\cap V^\infty.
\end{displaymath}
Let
\begin{displaymath}
 R=K_0\cup (B_N{}^*\concat yxt)
\end{displaymath}
and
\begin{displaymath}
 S=L\cup (B_N{}^*\concat xyt).
\end{displaymath}
Clearly $R\cup S\subs V_0$ and $R\cap S=\empt$, moreover 
\begin{equation}\label{kern}
\text{$R\subs
\outc {G^\infty}1{K_0}$ and $S\subs \inc {G^\infty}1{L}$  }
\end{equation}
because $(s\concat xt,s\concat yxt)\in E^\infty$ and
$(s\concat xyt,s\concat syt)\in E^\infty$ for each $s\in B_N{}^*$.

\begin{claim}\label{cl:v0}
$V_0= \outc {G[V_0]}1R\cup \inc {G[V_0]}1S$.
\end{claim}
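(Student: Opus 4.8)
The plan is to prove the nontrivial inclusion $V_0\subseteq\outc{G[V_0]}1{R}\cup\inc{G[V_0]}1{S}$; the reverse inclusion is automatic, since $R\cup S\subseteq V_0$ and the two sets on the right are computed inside the induced subgraph $G[V_0]$. So I would fix $z\in V_0$ and write $z=s\concat a\concat w$, where $s\in B_N{}^*$ is the longest initial segment of $z$ all of whose entries lie in $B_N$. Since every element of $V^\infty$ ends with an entry in $T$ and $B_N\subseteq N$, this $s$ is a proper initial segment of $z$, so the entry $a=z(|s|)$ is defined; and since $B\subseteq V\setm A$, hence $B_N\cap A=\empt$, the assumption $z\in V_0=(B_N{}^*\concat A\concat V^*)\cap V^\infty$ forces $a\notin B_N$, that is, $a\in A=\inc G1{\{x,y\}}\cup\outc G1{\{x,y\}}$. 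Consequently either $a\in\{x,y\}$, or at least one of the ordered pairs $(a,x),(a,y),(x,a),(y,a)$ is an edge of $G$.

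Next I would work with the four reference vertices
\begin{displaymath}
p_x=s\concat x\concat t,\quad q_{yx}=s\concat y\concat x\concat t\in R,\qquad \ell_y=s\concat y\concat t,\quad q_{xy}=s\concat x\concat y\concat t\in S,
\end{displaymath}
each of which has $s$ as an initial segment and each of which lies in $V_0$ (because $R\cup S\subseteq V_0$). The definition of $E^\infty$ via $\Delta$ gives the following principle: whenever $u$ and $v$ agree on their first $|s|$ entries but $u(|s|)\ne v(|s|)$, one has $(u,v)\in E^\infty$ if and only if $(u(|s|),v(|s|))\in E$. Applying this: if $a=x$ then $z$ and $\ell_y$ first differ at position $|s|$, with entries $x$ and $y$, so $(z,\ell_y)\in E^\infty$ by the hypothesis $(x,y)\in E$, and as $z,\ell_y\in V_0$ this gives $z\in\inc{G[V_0]}1{S}$; symmetrically, if $a=y$ then $(p_x,z)\in E^\infty$, so $z\in\outc{G[V_0]}1{R}$. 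If $a\notin\{x,y\}$, I would split according to which pair witnesses $a\in A$: $(a,x)\in E$ yields $(z,q_{xy})\in E^\infty$, $(a,y)\in E$ yields $(z,\ell_y)\in E^\infty$, $(x,a)\in E$ yields $(p_x,z)\in E^\infty$, and $(y,a)\in E$ yields $(q_{yx},z)\in E^\infty$; in every subcase one endpoint of the exhibited edge lies in $R$ or in $S$ and both endpoints lie in $V_0$, so $z\in\outc{G[V_0]}1{R}\cup\inc{G[V_0]}1{S}$. This exhausts all cases and proves the claim.

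I expect the argument to be routine bookkeeping with $\Delta$ once the decomposition $z=s\concat a\concat w$ is correctly in place. The two points that deserve care are (1) verifying that $z\in V_0$ genuinely pins down $a\in A$ — which uses exactly $B_N\cap A=\empt$ — and (2) checking that every reference vertex really lies in $V_0$, so that the single edge produced stays inside $G[V_0]$ and the conclusion is membership in $\inc{G[V_0]}1{S}$ or $\outc{G[V_0]}1{R}$, not merely in the corresponding set computed in all of $G^\infty$. I would also record that this claim uses no property of $B$ beyond $B\cap A=\empt$ (neither its independence nor $\outc{G[V\setm A]}2{B}=V\setm A$) and no property of the edge $(x,y)$ beyond $(x,y)\in E$ itself; the deeper facts enter only afterwards, when this local statement is combined with (\ref{kern}) to finish the proof of Theorem \ref{tm:ginf}.
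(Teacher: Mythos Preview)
Your proof is correct and follows essentially the same approach as the paper's: decompose $z\in V_0$ as $s\concat a\concat w$ with $s$ the maximal $B_N{}^*$ prefix, use $B_N\cap A=\emptyset$ to conclude $a\in A$, and then case-split on how $a$ is related to $\{x,y\}$, exhibiting in each case a single $E^\infty$-edge between $z$ and one of the four reference vertices in $R\cup S$. You spell out all six cases where the paper only writes two and defers the rest to ``similarly''; one sentence could be tightened (``forces $a\notin B_N$, that is, $a\in A$'' conflates two steps---maximality of $s$ gives $a\notin B_N$, and then $B_N\cap A=\emptyset$ forces the $V_0$-decomposition to agree with yours, giving $a\in A$), but the argument is sound.
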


\begin{proof}
\prlabel{cl:v0} Let $s\in V_0$ be arbitrary. Write $s=s'\concat
p$, where $s'\in B_N{}^*$ and $p(0)\in A$.
If $p(0)=x$ then $(s,s'\concat yt)\in
E^\infty$ and so $s\in \inc {G^\infty[V_0]}1{s'\concat yt}\subs \inc
{G^\infty[V_0]}1{S}$. Similarly, if $p(0)=y$ then $s\in \outc
{G^\infty[V_0]}1{R}$. So we can assume that  $a=p(0)\notin \{x,y\}$.
Then $(x,a)\in E$ or $(a,x)\in E$ or $(a,y)\in E$ or $(y,a)\in A$.
If $(x,a)\in E$ then $(s'\concat xt,s)\in E^\infty$ and so $s\in
\outc {G^\infty[V_0]}1R$. If $(a,x)\in E$ then $(s, s'\concat
xyt)\in E^\infty$ and so $s\in \inc {G^\infty[V_0]}1S$.
The remained cases can be handled similarly.
\end{proof}
\noindent

Let $R'=\outc {G[V_0]}1R\setm S$ and $S'=V_0\setm R'$.
Then $(R',S')$ is clearly a partition of $V_0$, 
and (\ref{kern}) and Claim \ref{cl:v0} together
imply that 
\begin{equation}
\text{$R'=\outc
{G^\infty[R']}2{K_0}$ and $S'=\inc {G^\infty[S']}2{L}$.}  
\end{equation}

  Now let
$V_1=(V^\infty\setm V_0)\cup K_0$.

\begin{claim}\label{cl:v1}
$V_1=\outc {G^\infty[V_1]}2{K}$.
\end{claim}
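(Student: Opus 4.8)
The plan is to show that the independent set $K=K_0\cup K_1$ witnesses $G^\infty[V_1]\in\outcl 2$, where $V_1=(V^\infty\setm V_0)\cup K_0$. First I would record what $V_1$ looks like: since $V_0=(B_N{}^*\concat A\concat V^*)\cap V^\infty$ collects exactly those vertices whose first coordinate outside $B_N{}^*$ lands in $A=\inc G1{\{x,y\}}\cup\outc G1{\{x,y\}}$, the complement $V^\infty\setm V_0$ consists of vertices of the form $s'\concat p$ with $s'\in B_N{}^*$ and $p(0)\in V\setm A$ (together with vertices lying entirely in $B_N{}^*\concat T$, the $s'=\empt$ type). Adjoining $K_0=B_N{}^*\concat xt\subs V_0$ puts the ``source'' copies of $K$ back in; note $K_1=B_N{}^*\concat B_T\subs V^\infty\setm V_0$ since $B_T=B\cap T\subs T$ and $T\cap A$ need not be empty — here I must check carefully that $B_T\cap A=\empt$, which holds because $B$ was chosen disjoint from $A$. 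So $K\subs V_1$, and $K$ is independent in $G^\infty$ (hence in $G^\infty[V_1]$) because for $\{p,q\}\in\br K;2;$ the letters $p(\Delta(p,q)),q(\Delta(p,q))$ either both lie in $B_N\cup B_T=B$, which is independent in $G$, or one is an $x$ or $t$ appended after a common $B_N{}^*$-prefix — in all cases no $E^\infty$-edge arises; this is the same bookkeeping used for $K$ in the proof of Theorem~\ref{tm:ginf} above.

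Next comes the covering argument: given arbitrary $s\in V_1$, I want a directed path of length at most $2$ in $G^\infty[V_1]$ from $K$ to $s$. Write $s=s'\concat p$ with $s'\in B_N{}^*$ maximal, so $p(0)=c\in V\setm B_N$ — and since $s\in V_1$, either $c\in V\setm A$ or $s\in K_0$ (the latter case is trivial since then $s\in K$). Assume $c\notin A$. Because $B$ witnesses $\outc{G[V\setm A]}2B=V\setm A$ and $c\in V\setm A$, there is $a\in B$ with either $(a,c)\in E$ or a path $a\,d\,c$ of length $2$ in $G[V\setm A]$. Now lift $a$ to a vertex of $K$ sharing the prefix $s'$: if $a\in B_N$ set $\ext a=s'\concat a\concat xt\in K_0$ (using that $xt$ is the distinguished tail of $K_0$), and if $a\in B_T$ set $\ext a=s'\concat a\in K_1$; similarly lift $d$. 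The crucial point is that the first coordinate where $\ext a$ and $s$ differ is exactly the position of $a$ versus $c$ inside the common prefix context, so $(\ext a,s)\in E^\infty$ whenever $(a,c)\in E$, and $\ext a\,\ext d\,s$ is a length-$2$ path whenever $a\,d\,c$ is. Finally I must verify this path stays inside $V_1$: $\ext a,\ext d\in K\subs V_1$ by construction, and $s\in V_1$ by hypothesis — the only thing to check is that $\ext d$ (an intermediate vertex lying in $B_N{}^*\concat d\concat\dots$ with $d\in V\setm A$, or $d\in B_T$) is in $V_1$, which follows since $d\notin A$ forces $\ext d\notin V_0$, hence $\ext d\in V^\infty\setm V_0\subs V_1$.

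The main obstacle I expect is precisely this last containment check — ensuring the connecting vertices of the length-$\le 2$ paths never escape into $V_0$, and conversely that the edges used within $V_1$ genuinely survive the restriction to $G^\infty[V_1]$ (i.e., that deleting $V_0$ did not destroy the path). This forces one to be careful about the boundary vertices $K_0$, which were deliberately moved from $V_0$ into $V_1$: a path targeting some $s\in K_0$ is handled trivially, but one must confirm that no required source or midpoint of a path lands in $V_0\setm K_0$. The bookkeeping is routine once one fixes, as above, the convention of appending $xt$ (never $yt$, which belongs to $L$ and governs the $\incl 2$ side) to $B_N$-letters; the disjointness $B\cap A=\empt$ and the maximality of the $B_N{}^*$-prefix do all the real work, exactly paralleling the argument for $\outc{G^\infty}2K\supseteq\outc{G^\infty}1L=V^\infty$ in the proof of Theorem~\ref{tm:nfree}.
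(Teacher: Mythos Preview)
Your proposal is correct and follows the same route as the paper: write $s=s'\concat p$ with $s'\in B_N{}^*$ maximal, handle $s\in K$ directly, and otherwise lift a path of length at most $2$ from $B$ to $p(0)$ in $G[V\setm A]$ into $G^\infty[V_1]$ via the extension $z\mapsto z\concat xt$ for $z\in N$ and $z\mapsto z$ for $z\in T$. Your one slip, asserting ``$\ext d\in K$ by construction'', is harmless: in fact $d\notin B$ (since $(a,d)\in E$ with $a\in B$ and $B$ independent), so $\ext d\notin K$, but your subsequent check that $\ext d\in V^\infty\setm V_0\subs V_1$ via $d\notin A$ goes through precisely because $d\notin B_N$ for the same reason.
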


\begin{proof}
\prlabel{cl:v1} For $z\in T$ let $\ext z=z$ and for $z\in  N$ let
$\ext z=z\concat xt$. Let $s\in V^\infty\setm V_0$. Write
$s=s'\concat p$, where $s'\in B_N{}^*$ and $p(0)\notin B_N{}^*$.
Since $s\notin V_0$ we have $p(0)\notin A$. If $p(0)\in B_T$ then
$s=s'\concat p(0)\in K$.

Hence we can assume that $p(0)\in V\setm (A\cup B).$ Thus there is
$b\in B$ such that either $(b,p(0))\in E $ or there is  $z\in V\setm
A$ such that $(b,z)\in E$ and $(z,p(0))\in E$. Then $s'\concat \ext
{b}\in K$ and in the first case $(s'\concat \ext {b}, s)\in
E^\infty$ and in the second case $(s'\concat \ext {b}, s'\concat
z,s)$ is a directed path of length 2 in $G^\infty $.
Therefore $s\in \outc {G^\infty[V_1]}2{s'\concat \ext {b}}\subs
\outc {G^\infty[V_1]}2{K}$.
\end{proof}
\noindent Hence the partition $(V^\infty\setm S',S')$ witnesses that
the digraph $G^\infty$ is in $ \inout 22$: $V^\infty\setm S'=R'\cup V_1= \outc
{G^\infty[R'\cup V_1]}2K$ and $S'=\inc {G^\infty[S']}2{L}$.
\end{proof}

\section{An observation}\label{s:haj}

\noindent
Consider once more the tournament $G=\gra {\mathbb Z}<$. Although $G \notin
\outcl 2$ (even $G\notin \outcl \infty\cup \incl \infty$), $G$ has
two vertices, $a=1$ and $b=0$, such that $\mathbb Z=\outc G1a\cup
\inc G1b$. This situation is not unique among the infinite digraphs:

\begin{theorem}\label{tm:o2i2}
For each  directed graph $G=\<V,E\>$ there are disjoint, independent
subsets $A$ and $B$ of $ V$ such that $V=\outc {}2A \cup \inc {}2B$.
\end{theorem}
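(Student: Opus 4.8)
The plan is to build the two independent sets $A$ and $B$ by a greedy transfinite recursion along a well-ordering of $V$, maintaining at each stage two disjoint independent sets $A_\xi$, $B_\xi$ together with the ``already covered'' set $C_\xi = \outc{}2{A_\xi}\cup\inc{}2{B_\xi}$, and arranging that every vertex enters $C_\xi$ at some stage. Fix an enumeration $\<v_\xi:\xi<\lambda\>$ of $V$ with $\lambda=|V|$. At a limit stage take unions. At a successor stage $\xi+1$, look at the first vertex $v=v_i$ not yet in $A_\xi\cup B_\xi\cup(\text{the part of }V\text{ we have explicitly disposed of})$; the point is that we must decide, irrevocably, whether to throw $v$ (or some neighbour of $v$) into $A$ or into $B$, and we must never destroy independence of $A$ or of $B$.

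The key local step is the following dichotomy, which is where the tournament-style argument of Theorem \ref{tm:inft} gets reused. Given the current vertex $v$ together with the finite (in fact just two-set) data built so far, either $v$ has no $E$-edge to any vertex of $A_\xi$ in either direction, in which case we may safely put $v$ into $A$ (set $A_{\xi+1}=A_\xi\cup\{v\}$, $B_{\xi+1}=B_\xi$); this immediately puts $v\in\outc{}2{A_{\xi+1}}$. Or $v$ is joined to $A_\xi$; symmetrically for $B_\xi$. The genuinely delicate configuration is when $v$ is forced ``between'' $A_\xi$ and $B_\xi$ — joined to both in the wrong directions — and also $v\notin\outc{}2{A_\xi}\cup\inc{}2{B_\xi}$. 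Here one argues as in the proof of Theorem~\ref{tm:inft}: pick a witness $y$ with $y\notin\outc{}2{v}$ if such exists, and observe that then every $z$ with $(z,y)\notin E$ must satisfy $(v,z)\notin E$, so $z\in\inc{}1{v}$; dually one finds a vertex $x$ with $v\in\outc{}1{x}$ covering the complementary part. The upshot is that by adding $v$ to one side and a single companion vertex to the other side, one covers $v$ and does not violate independence, because the companion is chosen to have the right (non-)edges to the current sets. One has to check the small number of cases — there are essentially three, according to whether $v$ can be sent to $A$-side, to $B$-side, or needs the two-point fix — and verify each time that the newly added points are $\compl G$-nonadjacent to everything already in their side, which follows from the defining choice (``no edge of $E$ in either direction'' is exactly $\compl G$-adjacency, so we add a point to a side only when it is $\compl G$-nonadjacent to that side, i.e. $E$-joined to it, but then it's already covered — the real additions are the safe ones).

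The main obstacle I expect is making the bookkeeping of the recursion airtight: ensuring that $|A_\xi|,|B_\xi|$ stay under control, that every vertex is eventually covered (this needs that at stage $\xi+1$ we always make progress on the $\le$-least uncovered vertex, and that the companion vertices we throw in are chosen by a $\min$-rule so they don't themselves cause an infinite regress), and that disjointness $A\cap B=\empt$ survives the limits. None of this is deep, but the case analysis in the successor step must be laid out carefully so that in every branch the two invariants — $A_{\xi+1},B_{\xi+1}$ disjoint and independent, and $v_i\in\outc{}2{A_{\xi+1}}\cup\inc{}2{B_{\xi+1}}$ — are simultaneously maintained. At the end, $A=\bigcup_{\xi<\lambda}A_\xi$ and $B=\bigcup_{\xi<\lambda}B_\xi$ are the desired sets: independence passes to the union since it is a property of pairs, disjointness is preserved since we never move a vertex from one side to the other, and $V=\bigcup_\xi C_\xi = \outc{}2{A}\cup\inc{}2{B}$ because every $v_i$ is covered by stage $i+1$ at the latest.
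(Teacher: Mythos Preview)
Your proposal is a plan rather than a proof, and it breaks precisely at the step you yourself flag as ``genuinely delicate.'' The argument you import from Theorem~\ref{tm:inft} is tournament-specific: you write that if $y\notin\outc{}2{v}$ then every $z$ with $(z,y)\notin E$ satisfies $(v,z)\notin E$, ``so $z\in\inc{}1{v}$'' --- but in an arbitrary digraph $(v,z)\notin E$ does not give $(z,v)\in E$; there may simply be no edge between $v$ and $z$. More seriously, you never establish that a suitable companion vertex exists. In the hard configuration --- every edge between $v$ and $A_\xi$ points out of $v$, every edge between $v$ and $B_\xi$ points into $v$, and $v\notin\outc{}2{A_\xi}\cup\inc{}2{B_\xi}$ --- you would need an in-neighbour of $v$ with no edge to $A_\xi$, or an out-neighbour with no edge to $B_\xi$, or something analogous at distance~$2$; nothing in your sketch rules out that every such candidate already touches the forbidden side. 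A greedy recursion that commits vertices irrevocably can paint itself into a corner here, and the parenthetical remark about $\compl G$-adjacency at the end of your middle paragraph does not resolve this --- it is circular.

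For comparison, the paper's proof is a direct two-step construction with no recursion: take $F_0$ maximal independent in $G$, take $F_1$ maximal independent in $G[V\setminus\inc{}1{F_0}]$, and set $A=F_0\cap\inc{}1{F_1}$ and $B=F_1\cup(F_0\setminus A)$. Maximality of $F_0$ forces $F_1\subseteq\outc{}1{A}$, whence $\outc{}1{F_1}\subseteq\outc{}2{A}$; and anything in $\inc{}1{F_0}$ reaches $B$ in at most two steps (into $F_0\setminus A\subseteq B$ directly, or through $A$ and then one more edge into $F_1\subseteq B$). Two maximal independent sets, no transfinite bookkeeping.
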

\begin{proof}
Let $F_0$ be maximal independent subset in $G$, and let $F_1$ be
maximal independent subset in $G[V\setminus \inc{}1{F_0}]$. Put
$A=F_0\cap \inc{}1{F_1}$ and $B=F_1\cup (F_0\setminus A)$.
\vspace{0.5cm} \psfrag*{f0}{$F_0$} \psfrag*{a}{$A$}
\psfrag*{vmb1f0}{$V\setminus \inc{}1{F_0}$} \psfrag*{b}{$B$}
\psfrag*{f1}{$F_1$} \psfrag*{b1f0}{$\inc{}1{F_0}$}
\psfrag*{b1f1}{$\inc{}1{F_1}$} \psfrag*{k1f1}{$\outc{}1{F_1}$}
\begin{center}
\includegraphics[keepaspectratio,width=10cm]{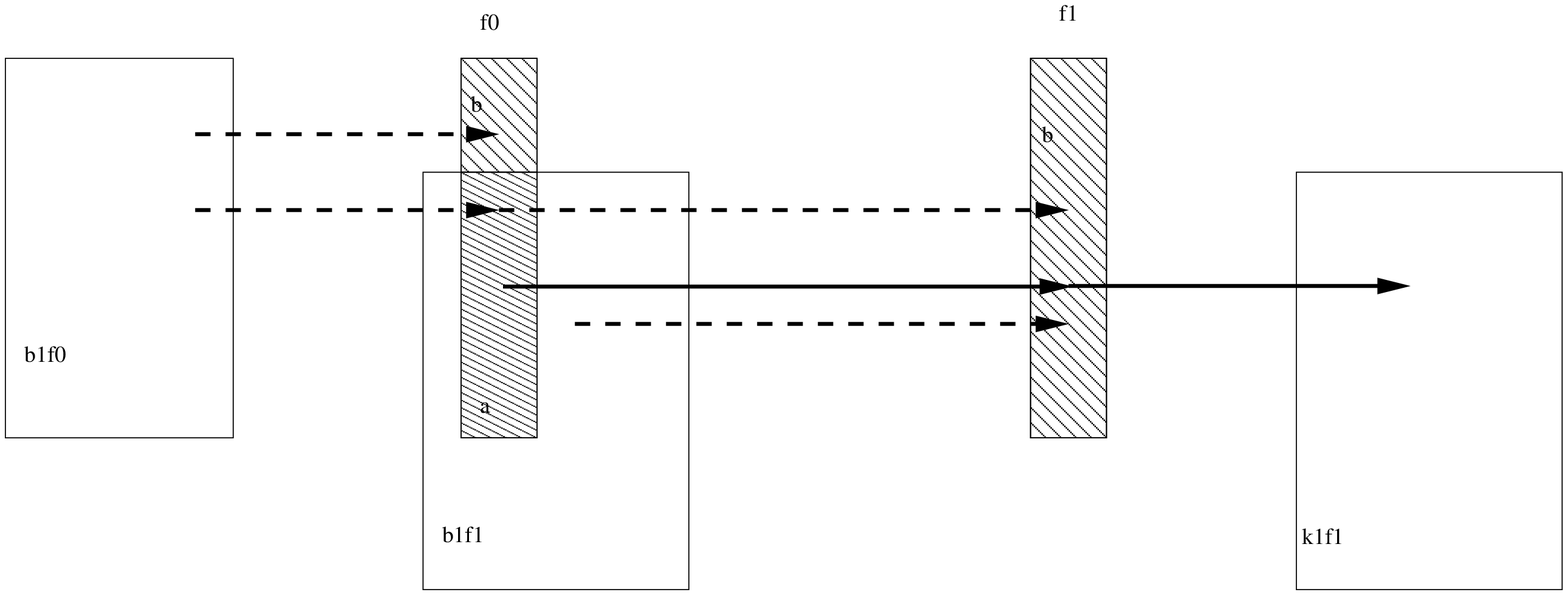}
\end{center}
The sets $A$ and $B$ are clearly independent. Moreover
\begin{eqnarray}\label{e:if0}
\inc {}1{F_0}=
\inc {}1{F_0\cap \inc {}1{F_1}}\cup \inc {}1{F_0\setm \inc {}1{F_0}}
\subs \nonumber\\
\inc {}2{F_1}\cup \inc {}1{B}\subs \inc {}2{B}.
\end{eqnarray}
Since $F_1\subs \outc {}1{A}$ and so $\outc {}1{F_1}\subs \outc
{}2{A}$ we have
\begin{eqnarray}\label{e:of0}
V\setm  \inc {}1{F_0}\subs \outc {}1{F_1}\cup \inc {}1{F_1} \subs
\outc {}2{A}\cup \inc {}1B\subs \nonumber\\ \outc {}2{A}\cup \inc
{}2{B}.
\end{eqnarray}
(\ref{e:if0}) and (\ref{e:of0}) together yield $V=\outc{}2{A}\cup
\inc{}2{B}$.
\end{proof}

Unfortunately, the construction above can not be applied to  solve 
Conjecture \ref{conjmain}
because if $V=\outc {G[V_A]}A2\cup \inc {G[V_B]}B2$ for some 
$V_A,V_B\subs V$ then 
we should have $A\subs  V_B$ otherwise we could not guarantee
$\inc {}1{F_0}\setm  \inc {}1{B}\subs \inc {G[V_B]}B2$.


\begin{thebibliography}{99}
\bibitem{CH} V. Chv\'atal - L. Lov\'asz: Every directed graph has a
semi-kernel, {\em Lecture Notes in Math} {\bf 411} (1974), 175.
\bibitem {GL} H. Galeana-S\'anchez - Xueliang Li: Semikernels and
$(k,l)$-kernels in digraphs, {\em SIAM J. Disc. Math.} {\bf 11}
(1998), 340--346.
\bibitem{gutin} G. Gutin - K.M. Koh -E.G. Tay - A. Yao: On the
number of quasi-kernels in digraphs, {\em J. Graph Theory} {\bf 46}
(2004), 48--56.
\bibitem {JM} H. Jacob - H. Meyniel: About quasi-kernels in a
digraph, {\em Discrete Math.} {\bf 154} (1996). 279--280.
\end{thebibliography}
\end{document}